\newtheorem{thm}{Theorem}[section]
\newtheorem{lem}{Lemma}[section]
\author{Yasunari Hikima, Atsushi Iwasaki\thanks{Kyoto University\ \ e-mail: iwasaki.atsushi.4x@kyoto-u.ac.jp}, and Ken Umeno\thanks{Kyoto University\ \ e-mail: umeno.ken.8z@kyoto-u.ac.jp}}
\title{The reference distributions of Maurer's universal statistical test and its improved tests}
\begin{document}
\maketitle
\begin{abstract}
Maurer's universal statistical test can widely detect non-randomness of given sequences.
Coron proposed an improved test, and further Yamamoto and Liu proposed a new test based on Coron's test.
These tests use normal distributions as their reference distributions, but the soundness has not been theoretically discussed so far.
Additionally, Yamamoto and Liu's test uses an experimental value as the variance of its reference distribution.
In this paper, we theoretically derive the variance of the reference distribution of Yamamoto and Liu's test and prove that the true reference distribution of Coron's test converges to a normal distribution in some sense.
We can apply the proof to the other tests with small changes.
\end{abstract}
\section{Introduction}
Random number sequences are used in many fields such as the Monte Carlo method and information security, including cryptography.
In particular, for cryptography, high randomness is required.
Therefore, we have to evaluate the randomness of a given sequence or property of its generator.
Randomness test is one of such evaluating method.
It is a hypothesis test, and the null is that the given sequence is truly random.
It is just an experimental method and cannot give proof of randomness.
On the other hand, the randomness test has a merit that we can adapt the test for any sequences regardless of their generators.
For instance, Tamura and Shikano used randomness tests to inspect the properties of a quantum computer developed by IBM \cite{Tamura-Shikano}.

Since we can consider numberless alternative hypotheses against the null, many randomness tests have been proposed.
We do not use a single randomness test for evaluation in practice, but a set of randomness tests called a test suite.
There are many test suites such as TestU01, Diehard and Dieharder \cite{TestU01,Diehard,Dieharder}, and NIST SP800-22 \cite{NIST} is one of the most widely used test suites.
SP800-22 consists of 15 types of randomness tests.
Some of the tests were pointed out that are problematic, and improvements have been proposed \cite{Kim-Umeno-Hasegawa,Hamano,Pareschi,Okada,Iwasaki,Hamamo-Kaneko}.

This paper focuses on Maurer's universal statistical test \cite{Maurer}, which is included in SP800-22.
Almost all proposed randomness tests aim to detect non-randomness which the developers explicitly specify, but universal tests that aim to detect a wide range of non-randomness are exceptions.
Maurer's universal statistical test was also developed to have such property.
Coron proposed an improved test based on Maurer's test \cite{Coron}, but the improved test has not been adopted for SP800-22 so far.
Further, Yamamoto and Liu added an optional improvement to Coron's test and state that their test can detect non-randomness more sensitively \cite{Yamamoto-Liu}.
These three tests use normal distributions as their reference distributions.
As a common problem of the three tests, it has not been proven that their true reference distributions are normal distributions at least approximately.
In other words, these tests use normal distributions as their reference distributions without theoretical evidence, although there is no report that using normal distribution is improper as far as the authors know.
This is one of the problems we address in this paper.
Additionally, the expected values and variances of the normal distributions are needed to perform these tests.
Deriving the theoretical values of the variances is harder than deriving the expected values.
As the variance of the Maurer's test, an experimental value was at first used, and later, Coron and Naccache derived the theoretical value \cite{Coron-Naccache}.
The method in Ref. \cite{Coron-Naccache} can be applied to Coron's test and derived the theoretical value of the variance of Coron's test.
However, the theoretical value of the variance of Yamamoto and Liu's test has not been derived, and an experimental value is used.
The lack of the theoretical value is the other problem we address in this paper.

This paper is organized as follows.
In section 2, we introduce Maurer's test, Coron's test, and Yamamoto and Liu's test.
In section 3, we theoretically derive the variance of the reference distribution of Yamamoto and Liu's test.
Section 3 is a reconstruction of Ref. \cite{Hikima-Iwasaki-Umeno} and \cite{Hikima}.
In section 4, we prove that the distribution of the test statistic of Coron's test converges to a normal distribution in some sense.
The proof can be applied for Maurer's test and Yamamoto and Liu's test with minor change.
Finally, we offer a conclusion.

\section{Maurer's test, Coron's test, and Yamamoto and Liu's test}
This section briefly introduces Maurer's test, Coron's test, and Yamamoto and Liu's test.
\subsection{Maurer's universal statistical test}
Let us consider a case that we test a given $N$-bit sequence $X$.
First, we divide $X$ into $L$-bit blocks.
We use the first $Q$ blocks for initialization, and the remained $K$ blocks for testing.
Here, $Q$ and $K$ are required to be sufficiently large comparing with $2^L$.
In practical, $Q\geq10\cdot2^L$ and $K\geq1000\cdot2^L$ are suggested.
For simplicity, we assume that $N=L\times (Q+K)$.
Let $b_n(X)$ be the $n$-th block and define $A_n(X)$ as
\begin{align}
A_n(X)=
\begin{cases}
n\ &(\text{$b_{n-m}(X)\ne b_n(X)$ for $m=1,2,\dots,n-1$})\\
\min \{m\in\mathbb{N}\mid b_{n-m}(X)=b_n(X)\} &(\text{otherwise})
\end{cases}
.
\end{align}
In other words, $A_n(X)$ implies the distance between the $n$-th block and the nearest former block which has the same value of the $n$-th block if exists.
Using $A_n(X)$, the test statistic is described as
\begin{align}
f_M(X):=\frac{1}{K}\sum_{n=Q+1}^{Q+K}\log_2 A_n(X).
\end{align}
Finally, using a normal distribution as the reference distribution (i.e., assuming that $f_M\left(R^{(0.5)}\right)$ follows a normal distribution where $R^{(p)}$ is a random variable sequence and each element independently takes one with probability $p$ and zero with probability $1-p$), we compute the corresponding p-value.
In practical use of Maurer's test, we need the expected value and variance of the reference distribution.
For sufficiently large $Q$, we can regard that the distributions of $A_n\left(R^{(0.5)}\right)$ $(n\geq Q+1)$ do not depend on $n$ and their support is $\mathbb{N}$.
Then, the expected value is given as
\begin{align}
\mathbb{E}\left[f_M\left(R^{(0.5)}\right)\right]=&\frac{1}{K}\sum_{n=Q+1}^{Q+K}\mathbb{E}\left[\log A_n\left(R^{(0.5)}\right)\right]\\
\simeq& 2^{-L}\sum_{l=1}^\infty(1-2^{-L})^{l-1}\log l.
\label{Maurer-heikin}
\end{align}
As an experimentally derived approximation of the variance, Maurer proposed that
\begin{align}
\text{Var}\left[f_M\left(R^{(0.5)}\right)\right]\simeq \left\{0.7-\frac{0.8}{L}+\left(1.6+\frac{12.8}{L}\right)K^{-\frac{4}{L}}\right\}\frac{\text{Var}\left[\log A_n\left(R^{(0.5)}\right)\right]}{K},
\end{align}
where by the same reason of deriving (\ref{Maurer-heikin}),
\begin{align}
\text{Var}\left[\log A_n\left(R^{(0.5)}\right)\right]\simeq 2^{-L}\sum_{l=1}^\infty(1-2^{-L})^{l-1}\left(\log l\right)^2-\left\{2^{-L}\sum_{l=1}^\infty(1-2^{-L})^{l-1}\log l\right\}^2.
\end{align}
Later, Coron and Naccache derived the theoretical formula of the variance of the reference distribution for $Q\to\infty$ \cite{Coron-Naccache}.

The test statistic of Maurer's test relates to entropy of the tested sequence and its universality is stated based on the fact.
Maurer proved that
\begin{align}
\lim_{L\to\infty}\left\{\mathbb{E}\left[f_M\left(R^{(p)}\right)\right]-LH(p)\right\}=C
\end{align}
where $C$ is a constant and $H(p):=-p\ln p-(1-p)\ln (1-p)$ is per bit entropy of $R^{(p)}$.
It was also proven that
\begin{align}
\lim_{L\to\infty}\frac{\mathbb{E}\left[f_M\left(U_{\mathcal{S}}\right)\right]}{L}=H_{\mathcal{S}},
\end{align}
where $U_{\mathcal{S}}$ is a binary sequence generated by an ergodic stationary source $\mathcal{S}$ which has finite memory and $H_{\mathcal{S}}$ is per bit entropy $\mathcal{S}$ generates.
Maurer gave a conjecture that
\begin{align}
\lim_{L\to\infty}\left\{\mathbb{E}\left[f_M\left(U_{\mathcal{S}}\right)\right]-LH_{\mathcal{S}}\right\}=C,
\end{align}
but it was shown that the conjecture is false \cite{Coron-Naccache}.

\subsection{Coron's universal statistical test}
Coron proposed a test which replaces the test statistic $f_M$ by
\begin{align}
f_C(X):=\frac{1}{K}\sum_{n=Q+1}^{Q+K}g\left(A_n(X)\right),
\end{align}
where
\begin{align}
g(i)=\frac{1}{\ln 2}\sum_{j=1}^{i-1}\frac{1}{j}.
\end{align}
This test uses a normal distribution as its reference distribution, too.
For sufficiently large $Q$, the expected value of $f_C\left(R^{(p)}\right)$ is approximated as
\begin{align}
\mathbb{E}\left[f_C\left(R^{(p)}\right)\right]=&\frac{1}{K}\sum_{n=Q+1}^{Q+K}\mathbb{E}\left[g\left(A_n\left(R^{(p)}\right)\right)\right]\\
\simeq&L H(p)\label{Coron-theorem}
\end{align}
for any $p\in(0,1)$.
Then, the expected value of the reference distribution is given as $L H(0.5)$.
For $p=\frac{1}{2}$, by the same way in Ref. \cite{Coron-Naccache}, we can derive the theoretical formula of $\text{Var}\left[f_C\left(R^{(0.5)}\right)\right]$ as $Q\to\infty$.
Based on the theoretical formula, Coron proposed an approximation
\begin{align}\text{Var}\left[f_C\left(R^{(0.5)}\right)\right]\simeq\left\{d(L)+\frac{e(L)\times2^L}{K}\right\}\frac{\text{Var}\left[g\left( A_n\left(R^{(0.5)}\right)\right)\right]}{K}
\end{align}
for sufficient large $Q$, and $d(L)$ and $e(L)$ are given in Ref. \cite{Coron}.

As seen in (\ref{Coron-theorem}), the test statistic of Coron's test relates to entropy of the tested sequence, too.
Unlike the case of Maurer's test, (\ref{Coron-theorem}) holds for finite $L$ and this is an advantage of Coron's test.

\subsection{Yamamoto and Liu's test}
Yamamoto and Liu proposed to flip a part of bits in a given sequence before performing Coron's test \cite{Yamamoto-Liu}.
The flipping is stochastic, and each bit $x_k$ is independently converted to $\hat{x}_k$ following the rule as
\begin{align}
\text{Pr}\left[\hat{x}_k=0|x_k=0\right]=&1,\label{eq:flip1}\\
\text{Pr}\left[\hat{x}_k=1|x_k=1\right]=&\alpha.\label{eq:flip2}
\end{align}
They stated that the flipping made Coron's test more sensitive, and showed some experimental results to detect non-randomness sensitively.
It is obvious that the exact reference distribution is the distribution of $f_C\left(R^{(p)}\right)$ for $p=\frac{\alpha}{2}$.
Approximately, this test also uses a normal distribution as its reference distribution, but the expected value and variance are not equal to those of Coron's test.
As we mentioned in the former subsection, (\ref{Coron-theorem}) holds for any $p\in(0,1)$ and we can use (\ref{Coron-theorem}) with $p=\frac{\alpha}{2}$ as the expected value for sufficiently large $Q$.
On the other hand, an experimentally derived value is used as the variance.
Yamamoto and Liu pointed out that $\left\{A_n\left(R^{(p)}\right)\right\}$ are not i.i.d, and thus to derive the theoretical variance is difficult.
However, the derivation process for the variances of the reference distributions of Maurer's test and Coron's test do not need independent but just identity.
Even in Yamamoto and Liu's test, identity is preserved.
Thus, we expect that we can theoretically derive the variance of the reference distribution of Yamamoto and Liu's test by a similar way of the derivation process.

\section{Derivation of the variance of the reference distribution of Yamamoto and Liu's test}
In this section, we derive the theoretical formula of the variance of the reference distribution that Yamamoto and Liu's test uses and numerically evaluate the formula.
To get the formula, we need the distributions of $A_n\left(R^{(p)}\right)$ and the joint distributions of $\left(A_n\left(R^{(p)}\right),A_{n+k}\left(R^{(p)}\right)\right)$.
For simplicity, we omit to write argument $R^{(p)}$ for every variable.
In the following, we replace the index of $A_n$ as illustrated in Figure \ref{fig:replace} and consider the case that $Q\to\infty$.
Then, the sequence of $\{A_n\}_{n=1}^{K}$ exactly follows a stationary ergodic process, that is, the joint distribution of $\{A_n\}_{n=k}^{k+m}$ depends only on $m$, and its support fully covers $\mathbb{N}^m$.
\begin{figure}[b]
\centering
\includegraphics{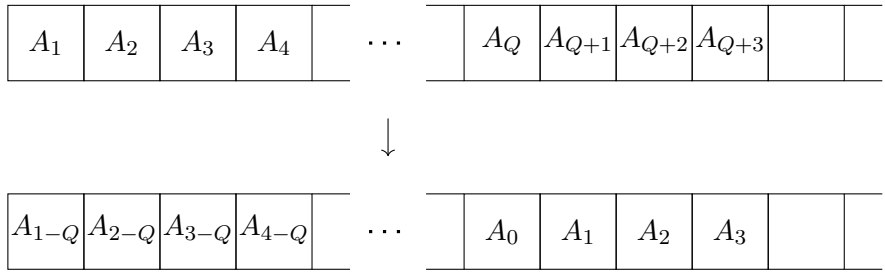}
\caption{Replacement of the index}
\label{fig:replace}
\end{figure}
\subsection{The distribution of $A_n$}
We consider the event of $\mathcal{M}:=\left< A_n=i \right>$ for $i\geq 1$.
As illustrated in Figure \ref{fig:A_n=i}, $\mathcal{M}$ is equivalent to the $n$-th block coincides $(n-i)$-th block and do not coincide other blocks between $n$-th and $(n-i)$-th blocks, i.e.,
\begin{align}
\mathcal{M} = \left< b_{n-i} = b_{n}, b_{n-i+1} \neq b_{n} , \dots, b_{n-1} \neq b_{n} \right>.
\end{align}
Let $\ell(b)$ be the number of ``$1$'' included in a block $b \in \{0,1\}^L$.
We have
\begin{align}
\mathrm{Pr}[\mathcal{M} \mid \ell(b_n) = r] &= w_r \times ( 1 - w_r )^{i-1},\label{eq:probability_M_mid} \\
\mathrm{Pr} [\ell(b_n) = r] &= \binom{L}{r} w_r, \label{eq:probability_l_r}
\end{align}
where $w_r = p^r (1-p)^{L-r}$ and $\binom{L}{r} = \frac{L!}{r!(L-r)!}$ is a binomial coefficient.
Then, we obtain
\begin{align}
\mathrm{Pr}[A_n=i] =& \sum_{r=0}^{L} \mathrm{Pr}[\mathcal{M} \mid \ell(b_n) = r] \times \mathrm{Pr} [\ell(b_n) = r],\\
=& \sum_{r=0}^{L} \binom{L}{r} w_r^2 ( 1 - w_r )^{i-1}\label{A_n=i}.
\end{align}
\begin{figure}
\centering
\includegraphics{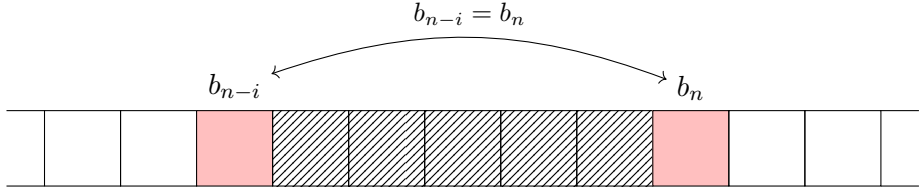}
\caption{The arrangement of blocks in the case of $A_n=i$.}
\label{fig:A_n=i}
\end{figure}
%
\subsection{The joint distribution of $A_n$ and $A_{n+k}$}\label{subsec:4-2}
We derive the probability of the event $\left< A_n=i,\, A_{n+k}=j \right>$ for $i\geq 1$ and $j\geq 1$.
Depending on $k$, $i$ and $j$, we consider five cases.
\subsubsection{Case 1: $1 \leq j \leq k-1$}
When $1 \leq j \leq k-1$, there is no overlapping between $(n-i)$-th to $n$-th blocks and $(n+k-j)$-th to $(n+k)$-th blocks as illustrated in Figure \ref{fig:case1}.
Then, we obtain
\begin{align}
\mathrm{Pr}[A_n=i, A_{n+k}=j] =& \mathrm{Pr}[A_n=i] \times \mathrm{Pr}[A_{n+k}=j]\\
=&\left( \sum_{r=0}^{L} \binom{L}{r}w_r^2 (1-w_r)^{i-1} \right) \times \left( \sum_{r=0}^{L} \binom{L}{r}w_r^2 (1-w_r)^{j-1} \right).\label{eq:joint1}
\end{align}
\begin{figure}
\centering
\includegraphics{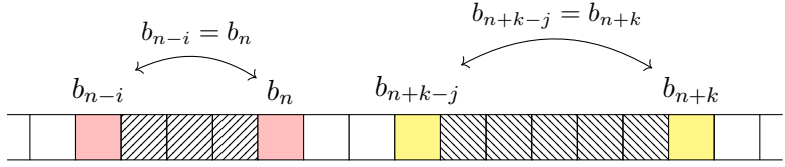}
\caption{An example of the arrangement of blocks in the case of $1\leq j \leq k-1$}
\label{fig:case1}
\end{figure}
\subsubsection{Case 2: $j=k$}
We fix $b \in \{0,1\}^{L}$, and consider the event $e_2(b):=\left< A_n=i ,\, A_{n+k}=j,\,b_n=b\right>$ for $j=k$ as illustrated in Figure \ref{fig:case2}.
The event $e_2(b)$ can be written as
\begin{align}\label{eq:e_2}
\begin{split}
e_2(b)
= &\left< b_{n-i} = b , b_{n} = b, b_{n+k} = b \right> \\
&\land \left< b_{n-i+1} \neq b , \dots , b_{n-1} \neq b \right> \\
&\land \left< b_{n+1} \neq b , \dots , b_{n+k-1} \neq b \right>.
\end{split}
\end{align}
Then, we have
\begin{align}\label{eq:probability_e_2}
\mathrm{Pr} \left[ e_2(b) \right]
=w_r^3 \times (1-w_{r})^{i+k-2},
\end{align}
where $r=l(b)$.
If $b\ne b^\prime$, then it is obvious that
\begin{align}
e_2(b)\land e_2(b^\prime)=\emptyset.
\end{align}
Then, since $\left< A_n=i ,\, A_{n+k}=j\right> = \bigvee_{b\in B^L} e_2(b)$ and the r.h.s. of (\ref{eq:probability_e_2}) depends only on $l(b)$, we obtain
\begin{align}
\mathrm{Pr}[A_n=i ,\, A_{n+k}=j] &=\mathrm{Pr}\left[ \bigvee_{b \in B^L} e_2(b) \right] \\
&= \sum_{b\in B^L} \mathrm{Pr} [e_2(b)] \\
&= \sum_{b\in B_0^L \cup \dots \cup B_L^L} \mathrm{Pr} [e_2(b)] \\
&= \sum_{r=0}^{L} \sum_{b\in B_{r}^L} \mathrm{Pr} \left[ e_2(b) \right] \\
&= \sum_{r=0}^{L} \dbinom{L}{r} w_{r}^3 (1-w_{r})^{i+k-2},\label{eq:joint2}
\end{align}
where $B^L=\{0,1\}^L$ and $B_r^L = \{ b\in B^L \mid \ell(b)=r \}$.
\begin{figure}
\centering
\includegraphics{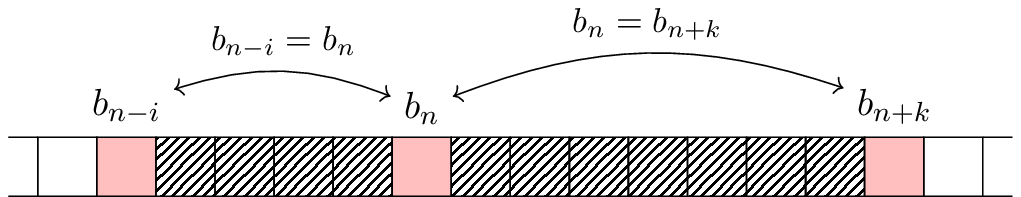}
\caption{An example of the arrangement of blocks in the case of $j = k$}
\label{fig:case2}
\end{figure}
%
\subsubsection{Case 3: $k+1 \leq j \leq k+i-1$}
We fix $b,\, b^\prime \in B^L$, and consider the event $e_3(b,b^\prime):= \left< A_n=i ,\, A_{n+k}=j,\,b_n=b,\,b_{n+k}=b^\prime\right>$ for $k+1 \leq j \leq k+i-1$ as illustrated in Figure \ref{fig:case3}.
The event $e_3(b,b^\prime)$ can be written as
\begin{align}
\begin{split}
\label{eq:e_3}
e_3 \left(b,b^\prime\right) =
&\left< b_{n-i} = b , b_{n} = b , b_{n+k-j} = b^\prime , b_{n+k} = b^\prime  , b_{n+k} \ne b_n \right> \\
&\land \left< b_{n-i+1} \neq b, \dots, b_{n+k-j-1} \neq b \right> \\
&\land \left< b_{n+k-j+1} \neq b, \dots, b_{n-1} \neq b \right> \\
&\land \left< b_{n+k-j+1} \neq b^\prime, \dots, b_{n-1} \neq b^\prime \right> \\
&\land \left< b_{n+1} \neq b^\prime , \dots, b_{n+k-1} \neq b^\prime \right>.
\end{split}
\end{align}
If $b=b^\prime$, then it is obvious that
\begin{align}
\mathrm{Pr} \left[ e_3 \left(b,b^\prime\right) \right] =0.
\end{align}
If $b\ne b^\prime$, then we have
\begin{align}
\label{eq:probability_e3}
\mathrm{Pr} \left[ e_3 \left(b,b^\prime\right) \right]
=& w_{r_1}^2 w_{r_2}^2
(1-w_{r_1})^{i-j+k-1}
(1-w_{r_1}-w_{r_2})^{j-k-1}
(1-w_{r_2})^{k-1},
\end{align}
where $r_1=l(b)$ and $r_2=l(b^\prime)$.
Let $h_3(r_1,r_2)$ be the r.h.s. of (\ref{eq:probability_e3}).
Since $e_3(b,b^\prime)\land e_3(\tilde{b},\tilde{b^\prime})=\emptyset$ for $(b,b^\prime)\ne (\tilde{b},\tilde{b^\prime})$, we obtain
\begin{align}
\mathrm{Pr} [A_n=i ,\, A_{n+k}=j]
=&\sum_{b \in B^L} \sum_{b^\prime \in B^{L} \setminus \{ b \}} \mathrm{Pr} \left[ e_3(b,b^\prime) \right] \\
=& \sum_{r_1=0}^{L} \sum_{b \in B^L_{r_1}} \sum_{r_2=0}^{L} \sum_{b^\prime \in B^L_{r_2} \setminus \{ b \}} \mathrm{Pr} \left[ e_3(b,b^\prime) \right] \\
\begin{split}
=& \sum_{r_1=0}^{L} \sum_{r_2 \neq r_1} \sum_{b \in B^L_{r_1}} \sum_{b^\prime \in B^L_{r_2}} \mathrm{Pr} \left[e_3(b,b^\prime) \right]
\\&+ \sum_{r_1=0}^{L} \sum_{r_2 \in \{r_1\}} \sum_{b \in B^L_{r_1}} \sum_{b^\prime \in B^L_{r_2} \setminus \{ b_1 \}} \mathrm{Pr} \left[e_3(b,b^\prime) \right]
\end{split} \\
=& \sum_{r_1=0}^{L} \sum_{r_2 \neq r_1} \dbinom{L}{r_1} \dbinom{L}{r_2} h_3(r_1,r_2)
+ \sum_{r=0}^{L} \dbinom{L}{r} \left\{ \dbinom{L}{r} -1 \right\} h_3(r,r)\label{eq:joint3}.
\end{align}
\begin{figure}
\centering
\includegraphics{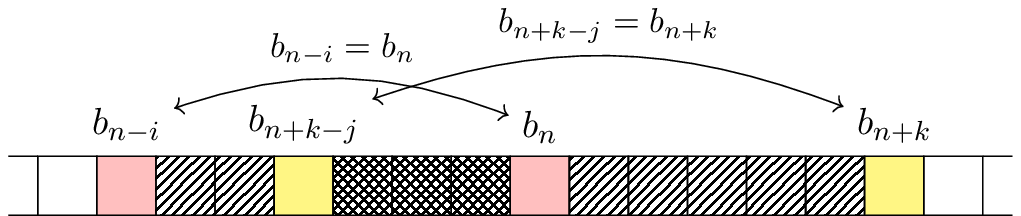}
\caption{An example of the arrangement of blocks in the case of $k+1 \leq j \leq k+i-1$}
\label{fig:case3}
\end{figure}
%
\subsubsection{Case 4: $j=k+i$}
As illustrated in Figure \ref{fig:case4}, it is obvious that
\begin{align}
\left< A_n=i \right>\land\left< A_{n+k}=j \right>=\emptyset
\end{align}
when $j=k+i$.
Then, we have
\begin{align}
\mathrm{Pr}[A_n=i,A_{n+k}=j] = 0.\label{eq:joint4}
\end{align}
\begin{figure}
\centering
\includegraphics{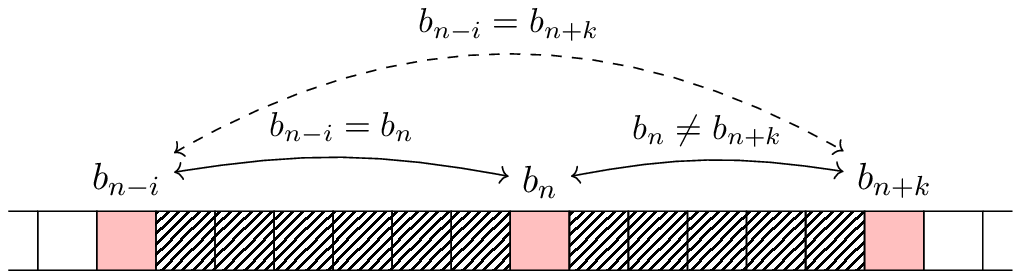}
\caption{An example of the arrangement of blocks in the case of $j=k+i$}
\label{fig:case4}
\end{figure}
%
\subsubsection{Case 5: $j \geq k+i+1$}
We fix $b,\, b^\prime \in B^L$, and consider the event $e_5(b,b^\prime):=\left< A_n=i ,\, A_{n+k}=j,b_n=b,\,b_{n+k}=b^\prime\right>$ for $j \geq k+i+1$ as illustrated in Figure \ref{fig:case5}.
The event $e_5(b,b^\prime)$ is written as
\begin{align}
\begin{split}
\label{eq:e_5}
e_5 (b,b^\prime) :=
&\left< b_{n+k-j} = b , b_{n-i} = b^\prime , b_{n} = b^\prime , b_{n+k} = b  , b_{n+k} \ne b_n\right> \\
&\land \left< b_{n+k-j+1} \neq b, \dots, b_{n-i-1} \neq b \right> \\
&\land \left< b_{n-i+1} \neq b, \dots, b_{n-1} \neq b \right> \\
&\land \left< b_{n-i+1} \neq b^\prime, \dots, b_{n-1} \neq b^\prime \right> \\
&\land \left< b_{n+1} \neq b , \dots, b_{n+k-1} \neq b \right>.
\end{split}
\end{align}
If $b=b^\prime$, then it is obvious that
\begin{align}
\mathrm{Pr} \left[ e_5 \left(b,b^\prime\right) \right] =0.
\end{align}
If $b\ne b^\prime$, then we have
\begin{align}
\label{eq:probability_e5}
\mathrm{Pr} \left[ e_5(b,b^\prime) \right]
=& w_{r_1}^2 w_{r_2}^2
(1-w_{r_1})^{-i+j-k-1}
(1-w_{r_1}-w_{r_2})^{i-1}
(1-w_{r_2})^{k-1},
\end{align}where $r_1=l(b)$ and $r_2=l(b^\prime)$.
Let $h_5(r_1,r_2)$ be the r.h.s. of (\ref{eq:probability_e3}).
Since $e_5(b,b^\prime)\land e_5(\tilde{b},\tilde{b^\prime})=\emptyset$ for $(b,b^\prime)\ne (\tilde{b},\tilde{b^\prime})$, by the same way in case 3, we obtain
\begin{align}
\mathrm{Pr} [A_n=i,\, A_{n+k}=j]
&= \sum_{r_1=0}^{L} \sum_{r_2 \neq r_1} \dbinom{L}{r_1} \dbinom{L}{r_2} h_5(r_1,r_2)
+ \sum_{r=0}^{L} \dbinom{L}{r} \left\{ \dbinom{L}{r} -1 \right\} h_5(r,r).\label{eq:joint5}
\end{align}
\begin{figure}
\centering
\includegraphics{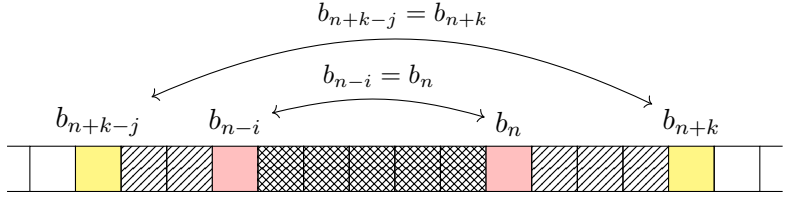}
\caption{An example of the arrangement of blocks in the case of $j \geq k+i+1$}
\label{fig:case5}
\end{figure}
%
\subsection{Derivation of the variance}
Finally, we derive $\mathrm{Var} [f_C]$ using $\mathrm{Pr}[A_n=i]$ and $\mathrm{Pr}[A_n=i, \, A_{n+k}=j]$.
Since $\{A_n\}_{n=1}^{K}$ is ergodic stationary under the assumption that $Q\to\infty$, $ \mathrm{Var} [g(A_n)]$ does not depend on $n$ and $\mathrm{Cov} [g(A_{i}), g(A_{j})]$ depends only on $j-i$.
Then, we get
\begin{align}
\label{eq:var_fC}
\mathrm{Var} [f_C]
=& \mathrm{Var} \left[ \frac{1}{K} \sum_{n=1}^{K} g(A_n) \right] \\
=& \frac{1}{K^2} \left( \sum_{n=1}^{K} \mathrm{Var} [g(A_n)] + 2 \sum_{1 \leq i < j \leq K} \mathrm{Cov} [g(A_{i}), g(A_{j})] \right)\\
=& \frac{1}{K}\mathrm{Var} [g(A_n)] +\frac{2}{K}\sum_{k=1}^{K-1}\left(1-\frac{k}{K}\right) \mathrm{Cov}[g(A_n),g(A_{n+k})]\label{final_var}.
\end{align}
Here, by (\ref{Coron-theorem}), we have
\begin{align}
\mathrm{Var}[g(A_n)]
&=\sum_{i=1}^{\infty} \{g(i)\}^2 \mathrm{Pr}[A_n=i]- \left\{LH(p)\right\}^2,\label{eq:var_g_def}\\
\mathrm{Cov}[g(A_n),g(A_{n+k})]
&= \sum_{i=1}^{\infty}\sum_{j=1}^{\infty}g(i)g(j)\mathrm{Pr}[A_n=i, \, A_{n+k}=j] - \left\{LH(p)\right\}^2\label{eq:covariance_g_g}.
\end{align}
Substituting (\ref{A_n=i}) into (\ref{eq:var_g_def}), and (\ref{eq:joint1}), (\ref{eq:joint2}), (\ref{eq:joint3}), (\ref{eq:joint4}) and (\ref{eq:joint5}) into (\ref{eq:covariance_g_g}), we can compute $\mathrm{Var}[g(A_n)]$ and $\mathrm{Cov}[g(A_n),g(A_{n+k})]$.
Then, using (\ref{final_var}), we obtain $\mathrm{Var} [f_C]$.

\subsection{Numerical analysis}\label{subsec:numerical_exp_L4}
We numerically analyzed $\mathrm{Var} [f_C]$ based on the above theoretical analysis.

\subsubsection{Experiment 1}
We computed $\mathrm{Var} [f_C]$ for $p=0.33,\, 0.4$ and $0.5$, where $L=4$.
Note that $p=0.33$ is suggested in Ref. \cite{Yamamoto-Liu}.
Figure \ref{fig:1} shows the results.
We can say that $\mathrm{Var} [f_C]$ is approximately proportional to $\frac{1}{K}$.
Table \ref{tab:1} shows $D_K(p)$ described as $\mathrm{Var} [f_C]=\frac{D_K(p)}{K}$.
Figure \ref{fig:2} shows comparison between the theoretical values of $\mathrm{Var} [f_C]$ and the corresponding experimentally derived values.
The detail of the experiment is as follows:
\begin{enumerate}
\item Generate $1000$ sequences using Mersenne twister \cite{MT}.
\item Stochastically flip the bits of the sequences following (\ref{eq:flip1}) and (\ref{eq:flip2}) with $\alpha=0.66$.
Random numbers generated by Mersenne twister were used for the stochastic flipping.
\item For fixed $K$, compute $f_C$ for each flipped sequence and get $\mathrm{Var} [f_C]$.
\item Repeat the above operation 30 times and compute the average and the standard deviation of $\mathrm{Var} [f_C]$.
\end{enumerate}
Here, we set $L=4$ and $Q=10\times 2^L$.
As a result, we can confirm that the numerical value based on our theory is valid with high accuracy.

\begin{figure}[htbp]
\centering
\includegraphics[width=10cm]{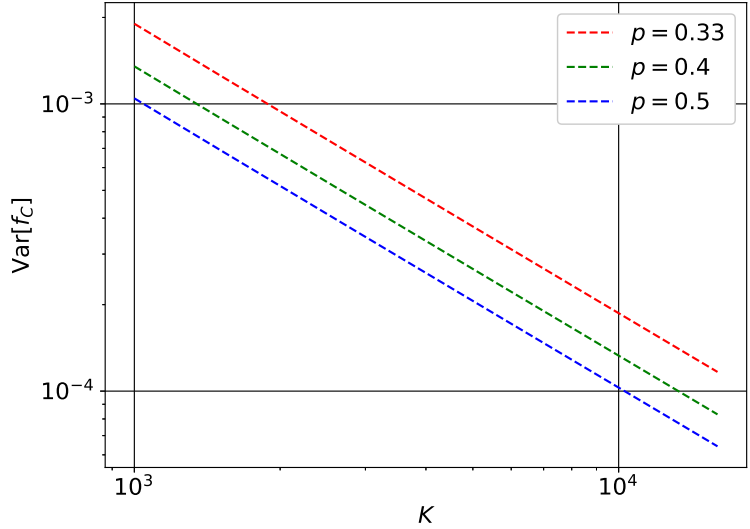}
\caption{$\mathrm{Var} [f_C]$ for $p=0.33,\, 0.4$ and $0.5$ with $L=4$ (Copyright(C)2020 IEICE, \cite{Hikima-Iwasaki-Umeno} Figure 1)}
\label{fig:1}
\end{figure}

\begin{table}[htbp]
  \centering
  \caption{$D_K(p)$ for different values of $p$ and $K$}
  \begin{tabular}{ccccc} \hline
    $p$ & $D_{10000}(p)$ & $D_{20000}(p)$ & $D_{30000}(p)$ & $D_{40000}(p)$  \\ \hline 
    $0.33$    & $1.867364$     & $1.865492$     & $1.864868$     & $1.864556$\\
    $0.4$     & $1.328692$     & $1.327430$     & $1.327009$     & $1.326799$\\
    $0.5$     & $1.028395$     & $1.027449$     & $1.027134$     & $1.026976$\\ \hline
  \end{tabular}
  \label{tab:1}
\end{table}

\begin{figure}[htbp]
\centering
\includegraphics[width=10cm]{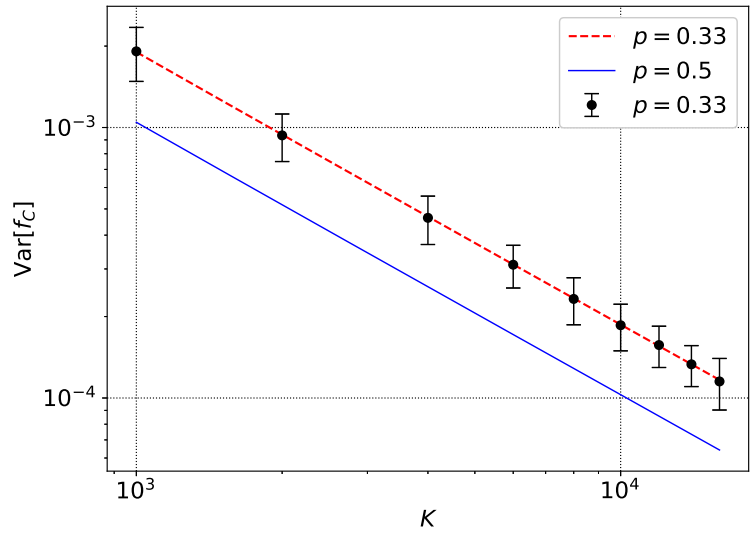}
\caption{Comparison between theoretical and experimental values of $\mathrm{Var} [f_C]$ for $p=0.33$ and $L=4$ (Copyright(C)2020 IEICE, \cite{Hikima-Iwasaki-Umeno} Figure 2)}
\label{fig:2}
\end{figure}

\subsubsection{Experiment 2}\label{subsec:4-3}
Yamamoto et al. suggest that $L=8$ and $K=1000\times 2^8 = 256000$ for their test \cite{Yamamoto-Liu}.
We want to derive $\mathrm{Var} [f_C]$ for such parameters, but directly computing $\mathrm{Var} [f_C]$ requires too much cost.
Then, we computed the exact values of $\mathrm{Var} [f_C]$ for not so large $K$ and approximated the values as
\begin{align}
 \mathrm{Var} [f_C] \simeq\frac{1}{K} \left( \zeta + \frac{\eta}{K} \right),\label{eq:approx_sigma}
\end{align}
where $\zeta$ and $\eta$ are constants.
We estimated the values of $\zeta$ and $\eta$ using the values of $\mathrm{Var} [f_C]$ for $K=40000,45000$, and obtained $\zeta=3.112098237555$ and $\eta=897.7504381251$.
Figure \ref{fig:3} shows a comparison between the exact values of $\mathrm{Var} [f_C]$ and (\ref{eq:approx_sigma}) with such $\zeta$ and $\eta$.
We can say that (\ref{eq:approx_sigma}) approximates $\mathrm{Var} [f_C]$ with high accuracy.
Substituting $K=1000\times 2^8$ into (\ref{eq:approx_sigma}), we obtained
\begin{align}\label{eq:proposed_value}
\mathrm{Var} [f_C] \simeq 1.217033232527091\times10^{-5}
\end{align}
for the suggested $K$.
Figure \ref{fig:4} shows a comparison between (\ref{eq:proposed_value}) and the experimental results using Mersenne twister.
In the experiment, we used $4\times 10^6$ sequences for each trial and set $L=8$, $Q=10\times 2^8$ and $K=1000\times 2^8$.
We can confirm that (\ref{eq:proposed_value}) is consistent with the experimental values and more appropriate than the value used in Ref. \cite{Yamamoto-Liu}.

\begin{figure}[htbp]
\centering
\includegraphics[width=10cm]{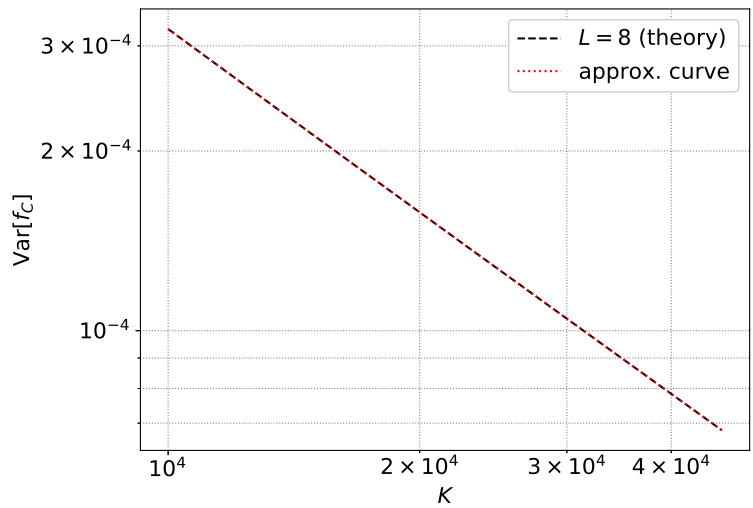}
\caption{Comparison between the theoretical values of $\mathrm{Var} [f_C]$ and the approximation for $p=0.33$ and $L=8$}
\label{fig:3}
\end{figure}

\begin{figure}[htbp]
\centering
\includegraphics[width=10cm]{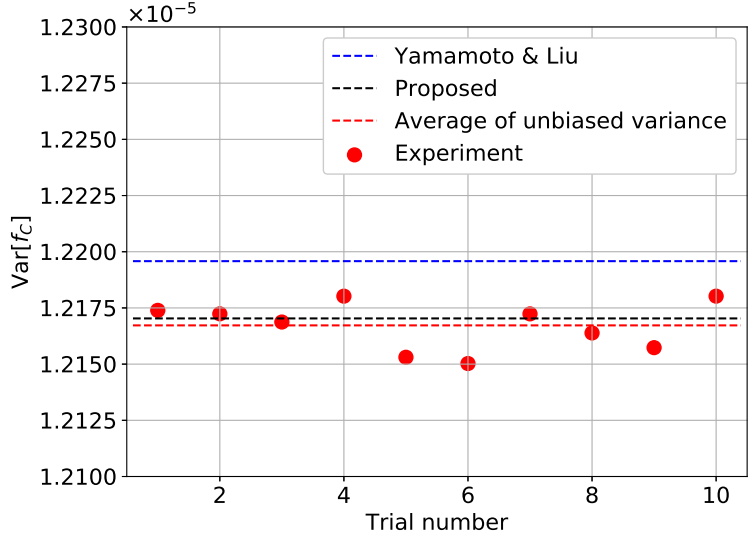}
\caption{Comparison between theoretical and experimental values of $\mathrm{Var} [f_C]$ for $p=0.33$ and $L=8$}
\label{fig:4}
\end{figure}

\section{Form of the reference distribution}
In this section, we prove that the distribution of the test statistic of Coron's test converges to a normal distribution.
More exactly, the test statistic converges to zero as $K\to\infty$ and we prove that the distribution of $\sqrt{K}\times f_C\left(R^{(0.5)}\right)$ converges to a normal distribution in some sense.
This proof can also be applied for Maurer's test and Yamamoto and Liu's test, with minor changes.
In the following, we omit to write argument $R^{(0.5)}$ for every variables.
Note that the omitted argument is different from that in the previous section.
\subsection{Stationary case}
First, we consider the case that $Q\to\infty$, i.e., the random variable sequence $\{A_n\}_{n\in\mathbb{N}}$ satisfies ergodic stationary condition.
We use the following lemmas repeatedly:
\begin{lem}\label{kihonsiki}
Let $M$ be a real number satisfying $0<M<1$.
Then, there exist real numbers $C>0$ and $0<\delta<1$ such that
\begin{align}
\forall p\in\mathbb{N},\ \forall a\in\mathbb{N},\ \forall b\in\mathbb{N},\ 0\leq\sum_{j=a}^{b}\left\{g(j)\right\}^pM^j<C\delta^{a-1}\left(1-\delta^b\right).
\end{align}
\end{lem}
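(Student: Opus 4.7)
The plan is to exploit the fact that $g(j)$ grows only logarithmically in $j$, while $M^j$ decays geometrically, so the summand $\{g(j)\}^p M^j$ is dominated by any geometric sequence $\delta^j$ with $M<\delta<1$, up to a multiplicative constant. Once such domination is in place, the claim reduces to a geometric-series computation and a small amount of bookkeeping to recast the resulting bound in the form $C\delta^{a-1}(1-\delta^b)$.

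First I would record the elementary estimate $g(j)\leq(1+\ln j)/\ln 2$ for every $j\geq 1$; this follows from $g(j)=H_{j-1}/\ln 2$ together with the standard harmonic bound $H_n\leq 1+\ln n$ for $n\geq 1$, the $j=1$ case being trivial because $g(1)=0$. Next, given $M$, I would fix some $\delta$ with $M<\delta<1$ (for example $\delta=(1+M)/2$) and set $\rho:=M/\delta\in(0,1)$. For each fixed $p\in\mathbb{N}$, the sequence $(1+\ln j)^p\rho^j$ is bounded on $\mathbb{N}$, since $\rho^j$ decays geometrically while $(1+\ln j)^p$ grows only poly-logarithmically. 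Consequently there exists a finite constant $C_0=C_0(p,M,\delta)$ such that $\{g(j)\}^p M^j\leq C_0\,\delta^j$ for all $j\geq 1$; enlarging $C_0$ slightly one may take this inequality strict.

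The last step is the geometric sum,
\[
\sum_{j=a}^{b}\delta^j=\frac{\delta^a-\delta^{b+1}}{1-\delta}=\frac{\delta^a(1-\delta^{b-a+1})}{1-\delta}\leq\frac{\delta}{1-\delta}\,\delta^{a-1}(1-\delta^{b}),
\]
where the last inequality uses $\delta^{b-a+1}\geq\delta^{b}$, valid because $a\geq 1$. Multiplying by $C_0$ gives the desired estimate with $C:=C_0\delta/(1-\delta)$. The inequality is strict when $a\leq b$ by the preceding domination, and when $a>b$ the sum is empty while the right-hand side is positive, so strictness is automatic.

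The only real obstacle is the quantifier structure: as written the lemma asks for a single pair $(C,\delta)$ serving every $p\in\mathbb{N}$, but $\{g(j)\}^p$ is unbounded in $p$ as soon as $g(j)>1$, so genuine uniformity in $p$ is impossible. I would therefore read the statement as asserting the existence of $(C,\delta)$ for each $p$ separately (allowing $C$, and if desired $\delta$, to depend on $p$). This reading is consistent both with the argument above and with the way the lemma is invoked later, where only small values such as $p\in\{1,2\}$ actually arise in the variance and covariance estimates for $f_C$.
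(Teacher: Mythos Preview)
Your argument is correct and follows the same idea as the paper, whose entire proof is the single line ``Since $g(i)=\mathcal{O}(\log i)$, the lemma is obvious.'' Your observation that no single $(C,\delta)$ can work uniformly in $p$ (because $g(2)>1$ makes $\{g(2)\}^pM^2$ unbounded in $p$) is correct and is a point the paper does not address; as you note, the applications only ever use $p\in\{1,2\}$, so the per-$p$ reading suffices.
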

\begin{proof}
Since $g(i)=\mathcal{O}(\log i)$, the lemma is obvious.
\end{proof}
\begin{lem}\label{kihonsiki2}
There exist positive real numbers $C$ and $\delta<1$ such that
\begin{align}
\left|\text{Pr}\left[A_n=i,\ A_{n+k}=j\right]-\text{Pr}\left[A_n=i\right]\text{Pr}\left[A_{n+k}=j\right]\right|<C\delta^{i+j}.
\end{align}
\end{lem}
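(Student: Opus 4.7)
My plan is to leverage the explicit formulas for the joint and marginal distributions derived in Section~\ref{subsec:4-2}, now specialised to $p=\tfrac12$. Under this specialisation $w_r=2^{-L}$ is independent of $r$; call this common value $w$. Hence the single sum $\sum_{r=0}^{L}\binom{L}{r}$ collapses to $2^{L}$ and the constrained double sums in Cases~3 and~5 collapse to $2^{L}(2^{L}-1)$. In particular (\ref{A_n=i}) reduces to the geometric law $\mathrm{Pr}[A_n=i]=w(1-w)^{i-1}$, and consequently the product of marginals is exactly $w^{2}(1-w)^{i+j-2}$. Every joint probability in the five cases likewise becomes a product of $w^{2}$ (or~$0$) and powers of $(1-w)$ and $(1-2w)$.

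I then dispose of the five cases in turn. Cases~1 and~2 each yield a joint probability equal to $w^{2}(1-w)^{i+j-2}$---Case~1 by (\ref{eq:joint1}), Case~2 because $j=k$ converts the exponent $i+k-2$ into $i+j-2$---so the difference is zero. Case~4 gives a vanishing joint probability by (\ref{eq:joint4}), so the difference equals the product, which already has the desired form. Only Cases~3 and~5 require a nontrivial estimate; after the $p=\tfrac12$ simplifications both take the shape
\[
\mathrm{Pr}[A_n=i,A_{n+k}=j]=w^{2}(1-w)^{\alpha}(1-2w)^{\beta},
\]
with $(\alpha,\beta)=(i-j+2k-1,\,j-k-1)$ in Case~3 and $(\alpha,\beta)=(j-i-1,\,i-1)$ in Case~5.

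The key analytic input is the elementary inequality $1-2w<(1-w)^{2}$, which lets me absorb every factor of $(1-2w)$ into two factors of $(1-w)$. A direct check using the case-dependent ranges shows that in both Cases~3 and~5 the combined exponent $\alpha+2\beta$ equals $i+j-3$---crucially independent of $k$---and that $\alpha,\beta\ge 0$ throughout the admissible parameter ranges. Thus the joint probability is bounded by $w^{2}(1-w)^{i+j-3}$ uniformly in $k$, and the triangle inequality together with the bound $w^{2}(1-w)^{i+j-2}$ on the product yields
\[
\bigl|\mathrm{Pr}[A_n=i,A_{n+k}=j]-\mathrm{Pr}[A_n=i]\mathrm{Pr}[A_{n+k}=j]\bigr|\le C(1-w)^{i+j}
\]
for the explicit constant $C=2w^{2}(1-w)^{-3}$. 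Setting $\delta:=1-2^{-L}\in(0,1)$ then proves the lemma.

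I expect the only real difficulty to be the algebraic bookkeeping in Cases~3 and~5: verifying that the combinatorial prefactor $2^{L}(2^{L}-1)w^{4}$ collapses to $w^{2}(1-w)$, and that the exponent identity $\alpha+2\beta=i+j-3$ holds uniformly in $k$ across the allowed index ranges. These are one-line checks, after which the bound is uniform in $k$ as the statement requires.
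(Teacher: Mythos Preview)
Your proposal is correct and follows essentially the same approach as the paper: both rely on the explicit joint and marginal formulas (the paper cites them directly from \cite{Coron-Naccache} as (\ref{Coron_joint}) and (\ref{Coron_syuuhen}), whereas you recover them by specialising Section~\ref{subsec:4-2} to $p=\tfrac12$), and the lemma is then immediate from the geometric structure of those expressions. The paper's proof says only ``Then, the lemma holds,'' while you supply the explicit bookkeeping --- the key identity $\alpha+2\beta=i+j-3$ and the inequality $1-2w<(1-w)^2$ --- that makes the bound uniform in $k$.
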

\begin{proof}
In Ref. \cite{Coron-Naccache}, it is shown that
\begin{align}
\text{Pr}[A_n=i, A_{n+k}=j]
&=\begin{cases}
2^{-2L}(1-2^{-L})^{i+j-2} & (1 \leq j \leq k-1) \\
2^{-2L}(1-2^{-L})^{i+j-2} & (j=k) \\
2^{-2L}(1-2^{-L})^{i-j+2k-1} \left( 1 - 2^{-L+1} \right)^{j-k-1} & (k+1 \leq j \leq k+i-1) \\
0 & (j=k+i) \\
2^{-2L}(1-2^{-L})^{-i+j-1} \left( 1 - 2^{-L+1} \right)^{i-1} & (j \geq k+i+1)
\end{cases},\label{Coron_joint}\\
\text{Pr}\left[A_n=i\right]&=\text{Pr}\left[A_{n+k}=i\right]=2^{-L}(1-2^{-L})^{i-1}.\label{Coron_syuuhen}
\end{align}
(We can get the same results by substituting $p=0.5$ into the results of the former section.)
Then, the lemma holds.
\end{proof}
Let $X_n$ be $g(A_n)$.
\begin{lem}
\label{prop1}
The average $\mathbb{E}\left[X_n\right]$ exists and does not depend on $n$.
\end{lem}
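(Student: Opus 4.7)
My plan is to split the claim into its two parts. The $n$-independence of $\mathbb{E}[X_n]$ is essentially a restatement of stationarity: since we are in the regime $Q \to \infty$, the authors have already asserted that $\{A_n\}_{n \in \mathbb{N}}$ is (strictly) stationary, so for every $n$ the random variable $A_n$ shares the common marginal distribution given by (\ref{Coron_syuuhen}), and therefore $g(A_n)$ does too. Consequently, whatever value $\mathbb{E}[g(A_n)]$ takes, it is the same for every $n$. So the only real content to establish is existence (equivalently, absolute convergence of the defining series).

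For existence, I would expand
\begin{align*}
\mathbb{E}[X_n] = \sum_{i=1}^\infty g(i)\,\mathrm{Pr}[A_n=i] = \sum_{i=1}^\infty g(i)\cdot 2^{-L}(1-2^{-L})^{i-1},
\end{align*}
using (\ref{Coron_syuuhen}). Since $g(i) = \frac{1}{\ln 2}\sum_{j=1}^{i-1}\frac{1}{j} \geq 0$, absolute convergence is convergence. Now I invoke Lemma \ref{kihonsiki} with $p=1$, $M = 1 - 2^{-L} \in (0,1)$, $a=1$, and $b \to \infty$: the lemma produces constants $C>0$ and $0 < \delta < 1$ with $\sum_{j=1}^{b} g(j) M^j < C(1-\delta^b) < C$ for every $b$, hence the partial sums are uniformly bounded and the monotone limit exists. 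Multiplying by the prefactor $2^{-L}/(1-2^{-L})$ (to align the index) gives a finite value for $\mathbb{E}[X_n]$.

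Combining these two observations finishes the proof. The hard part is essentially non-existent here — Lemma \ref{kihonsiki} was designed exactly to let logarithmic factors $g(i)^p$ be swallowed by any geometric tail, which is the only analytic issue in sight; once that lemma is available, the argument is a one-line application of it to the explicit geometric form of $\mathrm{Pr}[A_n=i]$. The only subtle point worth flagging is to make explicit that the $Q \to \infty$ stationarity promised in the opening of Section 4.1 is what licenses the $n$-independence, so that the reader does not confuse it with the weaker claim that the $A_n$ are asymptotically identically distributed.
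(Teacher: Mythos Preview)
Your proof is correct and mirrors the paper's own argument exactly: the paper also invokes ergodic stationarity for the $n$-independence and then appeals to Lemma~\ref{kihonsiki} together with (\ref{Coron_syuuhen}) for existence. You have simply made explicit the details (choice of $p=1$, $M=1-2^{-L}$, and the index-alignment prefactor) that the paper compresses into the phrase ``directly proven.''
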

\begin{proof}
Since $\left\{X_n\right\}_{n\in\mathbb{N}}$ is ergodic stationary, it is obvious that $\mathbb{E}\left[X_n\right]$ does not depend on $n$ if it exists.
The existence is directly proven by lemma \ref{kihonsiki} and (\ref{Coron_syuuhen}).
\end{proof}
We use the notation $\mu_X$ as $\mathbb{E}\left[X_n\right]$.
\begin{lem}
\label{prop2}
The variance $\text{Var}\left[\frac{1}{\sqrt{K}}\sum_{n=1}^{K}(X_n-\mu_X)\right]$ exists for all $K\in\mathbb{N}$ and the limit $\lim_{K\to\infty}\text{Var}\left[\frac{1}{\sqrt{K}}\sum_{n=1}^{K}(X_n-\mu_X)\right]$ also exists.
\end{lem}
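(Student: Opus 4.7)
The plan is to first establish finiteness of the variance for each $K$, then expand it as a sum of autocovariances using stationarity, and finally prove that these autocovariances are absolutely summable so that the limit exists.

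\textbf{Step 1 (Finiteness of individual second moments).} From the marginal (\ref{Coron_syuuhen}),
$$\mathbb{E}[X_n^2] = \sum_{i=1}^\infty g(i)^2 \cdot 2^{-L}(1-2^{-L})^{i-1},$$
which is finite by Lemma \ref{kihonsiki} applied with $p=2$, $M=1-2^{-L}$, $a=1$, and $b\to\infty$. Combined with Lemma \ref{prop1}, this gives $\text{Var}[X_n]<\infty$, independent of $n$ by stationarity. Hence every covariance $\text{Cov}(X_i,X_j)$ is finite by Cauchy-Schwarz, and the variance in question is well-defined for every $K\in\mathbb{N}$.

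\textbf{Step 2 (Expansion via stationarity).} Setting $c_k := \text{Cov}(X_1,X_{1+k})$, stationarity yields
$$\text{Var}\left[\frac{1}{\sqrt{K}}\sum_{n=1}^K (X_n-\mu_X)\right] = c_0 + 2\sum_{k=1}^{K-1}\left(1-\frac{k}{K}\right) c_k.$$

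\textbf{Step 3 (Geometric decay of $c_k$).} Decompose $c_k = \sum_{i,j\geq 1} g(i)g(j)\, D(i,j,k)$ with $D(i,j,k) := \text{Pr}[A_1=i, A_{1+k}=j] - \text{Pr}[A_1=i]\,\text{Pr}[A_{1+k}=j]$. Direct inspection of (\ref{Coron_joint}) and (\ref{Coron_syuuhen}) shows that $D(i,j,k)=0$ whenever $1\leq j\leq k$ (the two cases where the joint coincides with the product of the marginals). Only the terms with $j\geq k+1$ therefore contribute, and Lemma \ref{kihonsiki2} bounds them by $C\delta^{i+j}$. Applying Lemma \ref{kihonsiki} to both sums $\sum_{i\geq 1} g(i)\delta^i$ (a constant) and $\sum_{j\geq k+1} g(j)\delta^j$ (of order $\delta^k$) gives $|c_k|\leq C'\delta^{k}$ for some $C'>0$ and $0<\delta<1$.

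\textbf{Step 4 (Passing to the limit).} Since $|c_k|\leq C'\delta^k$, the series $\sum_{k=1}^\infty c_k$ converges absolutely. The partial sums $\sum_{k=1}^{K-1}(1-k/K)c_k$ are dominated by $\sum_{k=1}^\infty|c_k|<\infty$ and converge termwise to $c_k$, so dominated convergence gives
$$\lim_{K\to\infty}\text{Var}\left[\frac{1}{\sqrt{K}}\sum_{n=1}^K (X_n-\mu_X)\right] = c_0 + 2\sum_{k=1}^\infty c_k.$$

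The main obstacle lies in Step 3: Lemma \ref{kihonsiki2} alone supplies only a $k$-independent bound on $D(i,j,k)$, so geometric decay in $k$ is not automatic. The decay must be extracted from the structural observation---visible in (\ref{Coron_joint}) at $p=1/2$---that the joint probability equals the product of marginals whenever $j\leq k$, which forces the summation over $j$ to begin at $k+1$ and produces the required $\delta^k$ factor after applying Lemma \ref{kihonsiki}.
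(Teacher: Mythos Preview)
Your proof is correct and follows essentially the same route as the paper's: both expand the variance into autocovariances via stationarity, observe from (\ref{Coron_joint}) that the joint equals the product of marginals for $j\le k$ so that only terms with $j\ge k+1$ survive, and then combine Lemmas~\ref{kihonsiki} and~\ref{kihonsiki2} to obtain a geometric bound $|c_k|\le C'\delta^k$ yielding absolute summability. Your Step~4 makes the passage to the limit slightly more explicit (via a dominated-convergence argument for series) than the paper, but the substance is identical.
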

\begin{proof}
By lemmas \ref{kihonsiki} and \ref{kihonsiki2}, there exist positive real numbers $C_1$, $C_2$ and $\delta<1$ such that
\begin{align}
\text{Var}\left[X_n-\mu_X\right]
<&\sum_{j=1}^\infty\left\{g(j)\right\}^22^{-L}(1-2^{-L})^{j-1}\\
<&C_1,\\
\left|\text{Cov}\left[X_1-\mu_X,X_n-\mu_X\right]\right|
=&\left|\sum_{i=1}^\infty\sum_{j=1}^\infty g(i)g(j) \left(\text{Pr}\left[A_1=i,\ A_n=j\right]-\text{Pr}\left[A_1=i\right]\text{Pr}\left[A_n=j\right]\right)\right|\\
\leq&\sum_{i=1}^\infty\sum_{j=1}^\infty g(i)g(j) \left|\text{Pr}\left[A_1=i,\ A_n=j\right]-\text{Pr}\left[A_1=i\right]\text{Pr}\left[A_n=j\right]\right|\\
=&\sum_{i=1}^\infty\sum_{j=n}^\infty g(i)g(j) \left|\text{Pr}\left[A_1=i,\ A_n=j\right]-\text{Pr}\left[A_1=i\right]\text{Pr}\left[A_n=j\right]\right|\\
<&C_2\delta^n.
\end{align}
Since $\{X_n\}_{n\in\mathbb{N}}$ is ergodic stationary, $\text{Var}\left[X_n-\mu_X\right]$ does not depend on $n$ and \\$\text{Cov}\left[X_n-\mu_X,X_{n+k}-\mu_X\right]$ depends only on $k$.
Then, we obtain
\begin{align}
\text{Var}\left[\frac{1}{\sqrt{K}}\sum_{n=1}^{K}(X_n-\mu_X)\right]
=&\text{Var}\left[X_1-\mu_X\right]+2\sum_{n=2}^{K}\left(1-\frac{n-1}{K}\right)\text{Cov}\left[X_1-\mu_X,X_n-\mu_X\right].
\end{align}
We have
\begin{align}
\sum_{n=2}^{K}\left|\left(1-\frac{n-1}{K}\right)\text{Cov}\left[X_1-\mu_X,X_n-\mu_X\right]\right|
\leq&\sum_{n=2}^{K}\left|\text{Cov}\left[X_1-\mu_X,X_n-\mu_X\right]\right|\\
<&\frac{C_2\delta^2}{1-\delta}\left(1-\delta^{K-1}\right)\ \underset{K\to\infty}{\longrightarrow}\ \frac{C_2\delta^2}{1-\delta}.
\end{align}
From the above, lemma \ref{prop2} holds.
\end{proof}
We use the notation $\tau_X$ as $\lim_{K\to\infty}\text{Var}\left[\frac{1}{\sqrt{K}}\sum_{n=1}^{K}(X_n-\mu_X)\right]$.
For arbitrary $J\in\mathbb{R}$, we describe $Y_n^{(J)}$, $Z_n^{(J)}$ and $M_J$ as
\begin{align}
Y_n^{(J)}:=&\begin{cases}
&X_n\ \ \ \ (X_n\leq J)\\
&\ 0\ \ \ \ \ (X_n> J)
\end{cases}
,\\
Z_n^{(J)}:=&X_n-Y_n^{(J)},\label{teigi:Z}\\
M_J:=&\min \left\{ j\in\mathbb{N}\ |\ g(j)> J\right\}.
\end{align}
Clearly, we have
\begin{align}
\lim_{J\to\infty} M_J=\infty.
\end{align}
\begin{lem}
\label{heikinYZ}
For arbitrary $J\in\mathbb{R}$, $\mathbb{E}\left[Y_n^{(J)}\right]$ and $\mathbb{E}\left[Z_n^{(J)}\right]$ exist and do not depend on $n$.
\end{lem}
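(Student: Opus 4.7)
My plan is to exploit the strict monotonicity of $g$ in order to rewrite $Y_n^{(J)}$ and $Z_n^{(J)}$ as truncations based on the value of $A_n$, and then deduce both assertions from known facts about the marginal distribution of $A_n$ recorded in (\ref{Coron_syuuhen}). First I would note that $g(i) = \frac{1}{\ln 2}\sum_{j=1}^{i-1}\frac{1}{j}$ is strictly increasing in $i\in\mathbb{N}$, so $\{g(A_n) \leq J\}$ coincides with $\{A_n \leq M_J - 1\}$ and $\{g(A_n) > J\}$ with $\{A_n \geq M_J\}$. Hence, pointwise,
\begin{align}
Y_n^{(J)} = \sum_{i=1}^{M_J-1} g(i)\,\mathbf{1}_{\{A_n=i\}}, \qquad Z_n^{(J)} = \sum_{i=M_J}^{\infty} g(i)\,\mathbf{1}_{\{A_n=i\}}.
\end{align}

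The claim for $Y_n^{(J)}$ is then immediate: the sum is finite and consists of bounded terms, so $\mathbb{E}[Y_n^{(J)}] = \sum_{i=1}^{M_J-1} g(i)\Pr[A_n=i]$ exists, and ergodic stationarity of $\{A_n\}_{n\in\mathbb{N}}$ renders $\Pr[A_n=i]$, and therefore $\mathbb{E}[Y_n^{(J)}]$, independent of $n$.

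For $Z_n^{(J)}$, the only nontrivial point is convergence of the tail series $\sum_{i=M_J}^\infty g(i)\Pr[A_n=i]$. Here I would substitute (\ref{Coron_syuuhen}) to obtain $2^{-L}\sum_{i=M_J}^\infty g(i)(1-2^{-L})^{i-1}$ and apply lemma \ref{kihonsiki} with $p=1$, $a=M_J$, and $b\to\infty$, yielding a geometric bound of order $\delta^{M_J-1}$ for some $\delta<1$. Absolute convergence of the series justifies interchanging expectation and summation (by monotone convergence applied to the nonnegative terms), so $\mathbb{E}[Z_n^{(J)}]$ exists, and stationarity once again gives its independence from $n$. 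The main -- and essentially only -- technical ingredient is this tail bound, and it is already packaged in lemma \ref{kihonsiki}; I do not anticipate any further obstacle.
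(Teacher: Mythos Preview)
Your proof is correct. The treatment of $Y_n^{(J)}$ matches the paper's exactly (finite support plus stationarity). For $Z_n^{(J)}$, however, the paper takes a shorter route: instead of bounding the tail series $\sum_{i\geq M_J} g(i)\Pr[A_n=i]$ directly, it simply invokes the relation $Z_n^{(J)}=X_n-Y_n^{(J)}$ from (\ref{teigi:Z}) together with lemma~\ref{prop1}, so existence and $n$-independence of $\mathbb{E}[Z_n^{(J)}]$ follow by linearity from the already-established facts about $\mathbb{E}[X_n]$ and $\mathbb{E}[Y_n^{(J)}]$. Your direct tail estimate via lemma~\ref{kihonsiki} is essentially re-deriving the content of lemma~\ref{prop1} restricted to the tail; it works, but the paper's algebraic shortcut avoids this repetition.
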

\begin{proof}
Since $\left\{Y_n\right\}_{n\in\mathbb{N}}$ is ergodic stationary, $\mathbb{E}\left[Y_n^{(J)}\right]$ does not depend on $n$ if it exists.
For arbitrary $J$ and $n$, the support of $Y_n^{(J)}$ is a finite set.
Therefore, its existence is also apparent.
By lemma \ref{prop1} and (\ref{teigi:Z}), it is clear that $\mathbb{E}\left[Z_n^{(J)}\right]$ exists and does not depend on $n$.
\end{proof}
We use the notation $\mu_Y^{(J)}$ and $\mu_Z^{(J)}$ as $\mathbb{E}\left[Y_n^{(J)}\right]$ and $\mathbb{E}\left[Z_n^{(J)}\right]$, respectively.
Clearly, it is satisfied that
\begin{align}
\label{heikinnowa}
\mu_X=\mu_Y^{(J)}+\mu_Z^{(J)}.
\end{align}
\begin{lem}
For arbitrary $J\in\mathbb{R}$ and $K\in\mathbb{N}$, $\text{Var}\left[\frac{1}{\sqrt{K}}\sum_{n=1}^K\left(Y_n^{(J)}-\mu_Y^{(J)}\right)\right]$\\ and $\text{Var}\left[\frac{1}{\sqrt{K}}\sum_{n=1}^K\left(Z_n^{(J)}-\mu_Z^{(J)}\right)\right]$ exist.
\end{lem}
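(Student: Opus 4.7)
The plan is to reduce the existence of both variances to the finiteness of the pairwise second moments $\mathbb{E}[Y_n^{(J)} Y_m^{(J)}]$ and $\mathbb{E}[Z_n^{(J)} Z_m^{(J)}]$ for $1 \leq n,m \leq K$. Since $\mu_Y^{(J)}$ and $\mu_Z^{(J)}$ exist by Lemma \ref{heikinYZ} and the sum $\sum_{n=1}^{K}$ has only finitely many terms, expanding the square of the centered sum shows that the variance is a finite linear combination of such cross moments together with constants involving $\mu_Y^{(J)}$ and $\mu_Z^{(J)}$; thus its existence is equivalent to that of all the pairwise moments.

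For the $Y$-part the argument is essentially trivial. By construction $Y_n^{(J)}$ equals either $0$ or $X_n \leq J$, and since $X_n = g(A_n) \geq 0$, we have $0 \leq Y_n^{(J)} \leq \max(J,0)$. In particular $Y_n^{(J)} Y_m^{(J)}$ is a bounded random variable, so every cross moment is finite and the variance exists.

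For the $Z$-part, observe that $Z_n^{(J)} = X_n$ when $A_n \geq M_J$ and $0$ otherwise, so
\begin{align}
\mathbb{E}\left[Z_n^{(J)} Z_m^{(J)}\right]
= \sum_{i=M_J}^{\infty} \sum_{j=M_J}^{\infty} g(i)\,g(j)\,\text{Pr}\left[A_n = i,\, A_m = j\right].
\end{align}
The plan is to split the joint probability as $\text{Pr}[A_n=i, A_m=j] = \text{Pr}[A_n=i]\,\text{Pr}[A_m=j] + R_{n,m}(i,j)$. The product part contributes
\begin{align}
\left(\sum_{i=M_J}^{\infty} g(i)\,\text{Pr}[A_n=i]\right)\left(\sum_{j=M_J}^{\infty} g(j)\,\text{Pr}[A_m=j]\right),
\end{align}
each factor of which is finite by applying Lemma \ref{kihonsiki} to the explicit marginal (\ref{Coron_syuuhen}) with $M = 1-2^{-L}$. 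For the remainder, Lemma \ref{kihonsiki2} gives $|R_{n,m}(i,j)| \leq C\delta^{i+j}$, so its contribution is dominated by $\left(\sum_{i\geq M_J} g(i)\,\delta^i\right)\left(\sum_{j\geq M_J} g(j)\,\delta^j\right)$, which again converges by Lemma \ref{kihonsiki} (now with $M=\delta$). Hence $\mathbb{E}[Z_n^{(J)} Z_m^{(J)}]$ is finite for every pair $(n,m)$, and the second variance exists.

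The only genuine obstacle compared with the $Y$-case is that the $Z$-side involves an infinite double sum rather than a bounded variable; however, because $g(i) = \mathcal{O}(\log i)$ while the relevant probabilities and the correction $R_{n,m}(i,j)$ both decay geometrically, Lemma \ref{kihonsiki} absorbs the logarithmic growth without trouble. The splitting into ``product of marginals plus geometrically decaying remainder'' is the one non-mechanical step; everything else is routine expansion of the variance of a finite sum.
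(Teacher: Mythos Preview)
Your argument is correct and the $Y$-part matches the paper, but for the $Z$-part you take a genuinely different route. The paper does \emph{not} estimate $\mathbb{E}[Z_n^{(J)}Z_m^{(J)}]$ directly; instead it writes $Z_n^{(J)}-\mu_Z^{(J)}=(X_n-\mu_X)-(Y_n^{(J)}-\mu_Y^{(J)})$ and uses Cauchy--Schwarz on the covariance to obtain
\[
\text{Var}\!\left[\tfrac{1}{\sqrt K}\textstyle\sum_n(Z_n^{(J)}-\mu_Z^{(J)})\right]
\le\Bigl(\sqrt{\text{Var}[\tfrac{1}{\sqrt K}\textstyle\sum_n(X_n-\mu_X)]}+\sqrt{\text{Var}[\tfrac{1}{\sqrt K}\textstyle\sum_n(Y_n^{(J)}-\mu_Y^{(J)})]}\Bigr)^{2},
\]
so the finiteness follows immediately from Lemma~\ref{prop2} (for the $X$-sum) and the bounded-$Y$ case already handled. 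Your approach is more self-contained in that it does not invoke Lemma~\ref{prop2}, but it essentially re-derives the geometric-decay estimates that were packaged there; the paper's version is shorter precisely because it recycles that lemma rather than redoing the tail analysis.

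One minor point worth tightening: Lemma~\ref{kihonsiki2} and the joint formula~(\ref{Coron_joint}) behind it are stated for $A_n$ and $A_{n+k}$ with $k\ge1$, so your splitting argument covers the off-diagonal pairs $n\neq m$ but not the diagonal term $\mathbb{E}\bigl[(Z_n^{(J)})^2\bigr]=\sum_{i\ge M_J}g(i)^2\,\text{Pr}[A_n=i]$. That case is of course immediate from Lemma~\ref{kihonsiki} with $p=2$ and (\ref{Coron_syuuhen}); it just needs to be said separately.
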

\begin{proof}
For all $J$, the support of $Y_n^{(J)}-\mu_Y^{(J)}$ is a finite set.
Then, for all $J$ and $K$, the support of $\sum_{n=1}^K\left(Y_n^{(J)}-\mu_Y^{(J)}\right)$ is also a finite set and $\text{Var}\left[\frac{1}{\sqrt{K}}\sum_{n=1}^K\left(Y_n^{(J)}-\mu_Y^{(J)}\right)\right]$ exists.
By (\ref{teigi:Z}) and (\ref{heikinnowa}), we have
\begin{align}
&\text{Var}\left[\frac{1}{\sqrt{K}}\sum_{n=1}^K\left(Z_n^{(J)}-\mu_Z^{(J)}\right)\right]\\
\begin{split}
=& \text{Var}\left[\frac{1}{\sqrt{K}}\sum_{n=1}^K\left(X_k-\mu_X\right)\right]+\text{Var}\left[\frac{1}{\sqrt{K}}\sum_{n=1}^K\left(Y_n^{(J)}-\mu_Y^{(J)}\right)\right]\\
&-2 \text{Cov}\left[\frac{1}{\sqrt{K}}\sum_{n=1}^K\left(X_n-\mu_X\right),\frac{1}{\sqrt{K}}\sum_{n=1}^K\left(Y_n^{(J)}-\mu_Y^{(J)}\right)\right]
\end{split}\\
\begin{split}
\leq& \text{Var}\left[\frac{1}{\sqrt{K}}\sum_{n=1}^K\left(X_n-\mu_X\right)\right]+\text{Var}\left[\frac{1}{\sqrt{K}}\sum_{n=1}^K\left(Y_n^{(J)}-\mu_Y^{(J)}\right)\right]\\
&+2 \sqrt{\text{Var}\left[\frac{1}{\sqrt{K}}\sum_{n=1}^K\left(X_n-\mu_X\right)\right]\text{Var}\left[\frac{1}{\sqrt{K}}\sum_{n=1}^K\left(Y_n^{(J)}-\mu_Y^{(J)}\right)\right]}.
\end{split}
\end{align}
Then, by lemma \ref{prop2}, $\text{Var}\left[\frac{1}{\sqrt{K}}\sum_{k=1}^K\left(Z_k^{(J)}-\mu_Z^{(J)}\right)\right]$ exists for all $J$ and $K$.
\end{proof}
\begin{lem}
\label{lem1}
As $J\to\infty$, $\mu_Z^{(J)}\to0$.
\end{lem}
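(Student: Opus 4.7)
The plan is to reduce $\mu_Z^{(J)}$ to an explicit tail sum and then invoke Lemma \ref{kihonsiki} to bound that tail sum by a geometrically decaying quantity in $M_J$.

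First, I would unwind the definitions. By construction $Z_n^{(J)} = X_n \mathbf{1}_{X_n > J} = g(A_n)\mathbf{1}_{g(A_n) > J}$, and since $g$ is strictly increasing in $i$, the condition $g(A_n) > J$ is equivalent to $A_n \geq M_J$. Combining this with the marginal formula (\ref{Coron_syuuhen}), $\text{Pr}[A_n = i] = 2^{-L}(1-2^{-L})^{i-1}$, one obtains
\begin{align}
\mu_Z^{(J)} = \mathbb{E}\!\left[Z_n^{(J)}\right] = 2^{-L}\sum_{i=M_J}^{\infty} g(i)(1-2^{-L})^{i-1}.
\end{align}

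Next, I would apply Lemma \ref{kihonsiki} with $M = 1-2^{-L}$ (which lies in $(0,1)$), $p=1$, $a = M_J$, and $b$ arbitrary; this yields constants $C>0$ and $\delta\in(0,1)$, independent of $J$, such that
\begin{align}
0 \leq \sum_{i=M_J}^{b} g(i)(1-2^{-L})^{i} < C\delta^{M_J-1}(1-\delta^{b}).
\end{align}
Letting $b \to \infty$ and absorbing the factor $(1-2^{-L})^{-1}$ into the constant, one gets $0\leq\mu_Z^{(J)}\leq C'\delta^{M_J-1}$ for some $C'>0$.

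Finally, since the excerpt already notes that $M_J \to \infty$ as $J \to \infty$, the upper bound $C'\delta^{M_J-1}$ tends to $0$, forcing $\mu_Z^{(J)} \to 0$. There is essentially no obstacle here: the whole argument is an immediate corollary of Lemma \ref{kihonsiki} once one recognizes that $\{g(A_n) > J\} = \{A_n \geq M_J\}$ reduces $\mu_Z^{(J)}$ to a tail sum of the form covered by that lemma. The only point worth double-checking is that the bound in Lemma \ref{kihonsiki} is uniform in the upper endpoint $b$, so that the passage to $b \to \infty$ is legitimate.
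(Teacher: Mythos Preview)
Your argument is correct and essentially identical to the paper's own proof: both express $\mu_Z^{(J)}$ as the tail sum $\sum_{j=M_J}^\infty g(j)\,2^{-L}(1-2^{-L})^{j-1}$ via (\ref{Coron_syuuhen}), bound it by $C\delta^{M_J}$ using Lemma~\ref{kihonsiki}, and conclude from $M_J\to\infty$. Your write-up is just more explicit about the equivalence $\{g(A_n)>J\}=\{A_n\geq M_J\}$ and the passage $b\to\infty$, but nothing substantive differs.
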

\begin{proof}
By lemma \ref{kihonsiki} and (\ref{Coron_syuuhen}), there exist positive real numbers $C$ and $\delta<1$ such that
\begin{align}
\left|\mu_Z^{(J)}\right|
=&\left|\sum_{j=M_J}^\infty g(j) 2^{-L}(1-2^{-L})^{j-1}\right|\\
<&C\delta^{M_J}\ \underset{J\to\infty}{\longrightarrow}\ 0.
\end{align}
\end{proof}
\begin{lem}
\label{lem2}
For all $J\in\mathbb{R}$, there exists a positive real number $\tau_Y^{(J)}$ such that $\frac{1}{\sqrt{K}}\sum_{n=1}^{K}(Y_n^{(J)}-\mu_Y^{(J)})$ follows a normal distribution $\mathcal{N}(0,\tau_Y^{(J)})$ as $K\to\infty$.
\end{lem}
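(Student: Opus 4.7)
My plan is to exploit the fact that, although $X_n=g(A_n)$ may depend on arbitrarily far-back blocks, the truncated version $Y_n^{(J)}$ depends on only finitely many blocks, which turns $\{Y_n^{(J)}\}$ into an $m$-dependent stationary sequence amenable to the classical central limit theorem.

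First I would make the finite-dependence precise. By definition, $Y_n^{(J)}$ is nonzero only when $A_n<M_J$, and in that case $A_n$ equals the smallest $m\in\{1,\ldots,M_J-1\}$ with $b_{n-m}=b_n$. Both the event $\{A_n<M_J\}$ and, when it occurs, the value of $A_n$ are therefore determined by the $M_J$ consecutive blocks $b_{n-(M_J-1)},\ldots,b_n$. Hence $Y_n^{(J)}$ is a bounded measurable function of those blocks. Since in the regime $Q\to\infty$ the blocks $\{b_n\}$ are i.i.d., whenever $|m-n|\ge M_J$ the windows defining $Y_n^{(J)}$ and $Y_m^{(J)}$ are disjoint, so $Y_n^{(J)}$ and $Y_m^{(J)}$ are independent. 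Therefore $\{Y_n^{(J)}-\mu_Y^{(J)}\}$ is a strictly stationary, centered, bounded, $(M_J-1)$-dependent sequence, and the classical Hoeffding--Robbins CLT applies, yielding convergence in distribution of $\frac{1}{\sqrt{K}}\sum_{n=1}^{K}(Y_n^{(J)}-\mu_Y^{(J)})$ to $\mathcal{N}(0,\tau_Y^{(J)})$ with
$$\tau_Y^{(J)}=\mathrm{Var}(Y_1^{(J)})+2\sum_{k=1}^{M_J-1}\mathrm{Cov}(Y_1^{(J)},Y_{1+k}^{(J)}).$$
If a self-contained derivation were preferred, the standard Bernstein blocking argument works: partition $\{1,\ldots,K\}$ into alternating long blocks of length $\ell_K\to\infty$ and short separators of length $M_J$; the $(M_J-1)$-dependence makes the long-block sums i.i.d., the small-block contribution has vanishing variance, and the Lindeberg condition is automatic because $Y_n^{(J)}$ is bounded.

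The step I expect to require the most care is the positivity of $\tau_Y^{(J)}$. The Hoeffding--Robbins formula in general only guarantees $\tau\ge 0$ and it can vanish in degenerate telescoping situations. Here degeneracy can be ruled out either directly from the explicit joint distribution of $(A_n,A_{n+k})$ computed in section 3 (which shows the relevant short-range covariances are non-trivial), or by observing that $\tau_Y^{(J)}\to\tau_X$ as $J\to\infty$, so once $\tau_X>0$ is established for the untruncated sequence, positivity of $\tau_Y^{(J)}$ follows for all sufficiently large $J$. Modulo this verification, the lemma is a direct consequence of the finite-dependence reduction and the classical CLT.
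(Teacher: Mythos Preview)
Your approach is essentially the same as the paper's: both observe that the truncation makes $\{Y_n^{(J)}\}$ an $(M_J-1)$-dependent stationary bounded sequence (the paper phrases this as independence of $\{Y_n^{(J)}\}_{n\le l}$ and $\{Y_n^{(J)}\}_{n\ge l+M_J}$) and then invoke the Hoeffding--Robbins CLT, using finiteness of the support to get the required moment condition. Your discussion of the positivity of $\tau_Y^{(J)}$ is a valid concern that the paper itself does not address; your second route (via $\tau_Y^{(J)}\to\tau_X$) only yields positivity for large $J$, so if you want the statement literally for all $J$ you should pursue the direct computation, and note that for very small $J$ the variable $Y_n^{(J)}$ is identically zero so the limit is degenerate.
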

\begin{proof}
It is obvious that $\left\{Y_n^{(J)}-\mu_Y^{(J)}\right\}_{n\in\mathbb{N}}$ is ergodic stationary.
By the definitions of $\{Y_n\}_{n\in\mathbb{N}}$, for arbitrary positive integer $l$, $\left\{Y_n^{(J)}-\mu_Y^{(J)}\right\}_{n=1,2,\dots,l}$ and $\left\{Y_n^{(J)}-\mu_Y^{(J)}\right\}_{n=l+M_J,l+M_J+1,\dots}$ are independent each other.
For fixed $J$,
\begin{align}
\mathbb{E}\left[|Y_1^{(J)}-\mu_Y^{(J)}|^3\right]<\infty
\end{align}
because the support of $Y_1^{(J)}$ is a finite set.
Then, by the theorem 2 in Ref. \cite{Hoeffding}, the lemma holds.
\end{proof}
\begin{lem}
\label{lem3}
The following equation is true:
\begin{align}
\lim_{J\to\infty}\lim_{K\to\infty}\text{Var}\left[\frac{1}{\sqrt{K}}\sum_{n=1}^{K}(Z_n^{(J)}-\mu_Z^{(J)})\right]=0.
\end{align}
\end{lem}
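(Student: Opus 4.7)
My plan is to decompose the double limit into two manageable pieces: for fixed $J$, identify the $K\to\infty$ limit as an absolutely convergent covariance series, and then show that this series vanishes as $J\to\infty$. The whole argument rests on one exponential covariance estimate that I would establish at the outset.

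By the ergodic stationarity of $\{A_n\}$, hence of $\{Z_n^{(J)}\}$, I would first expand
\begin{align}
V_K^{(J)}:=\text{Var}\left[\frac{1}{\sqrt{K}}\sum_{n=1}^{K}(Z_n^{(J)}-\mu_Z^{(J)})\right]=\text{Var}[Z_1^{(J)}]+2\sum_{k=1}^{K-1}\left(1-\frac{k}{K}\right)\text{Cov}[Z_1^{(J)},Z_{1+k}^{(J)}].
\end{align}
Since $Z_n^{(J)}$ equals $g(A_n)$ when $A_n\geq M_J$ and vanishes otherwise, the covariance becomes
\begin{align}
\text{Cov}[Z_1^{(J)},Z_{1+k}^{(J)}]=\sum_{i\geq M_J}\sum_{j\geq M_J}g(i)g(j)\bigl(\text{Pr}[A_1=i,A_{1+k}=j]-\text{Pr}[A_1=i]\text{Pr}[A_{1+k}=j]\bigr).
\end{align}
The key observation from (\ref{Coron_joint}) is that the bracketed difference vanishes identically whenever $j\leq k$: cases 1 and 2 of the joint distribution each reduce exactly to the product of marginals. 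Consequently the inner sum effectively runs only over $j\geq\max(M_J,\,k+1)$, and applying lemma \ref{kihonsiki2} followed by two applications of lemma \ref{kihonsiki} yields
\begin{align}
\bigl|\text{Cov}[Z_1^{(J)},Z_{1+k}^{(J)}]\bigr|\leq C\,\delta^{M_J}\,\delta^{\max(M_J,\,k+1)}
\end{align}
for some $C>0$ and $0<\delta<1$ independent of $J$ and $k$.

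This bound is summable in $k$ with total of order $M_J\,\delta^{2M_J}$, and since $|1-k/K|\leq 1$, Tannery's theorem (dominated convergence for series) lets me interchange $\lim_{K\to\infty}$ with the $k$-sum to obtain
\begin{align}
\tau_Z^{(J)}:=\lim_{K\to\infty}V_K^{(J)}=\text{Var}[Z_1^{(J)}]+2\sum_{k=1}^\infty\text{Cov}[Z_1^{(J)},Z_{1+k}^{(J)}].
\end{align}
The marginal variance satisfies $\text{Var}[Z_1^{(J)}]\leq\sum_{i\geq M_J}\{g(i)\}^2\,2^{-L}(1-2^{-L})^{i-1}=O(\delta^{M_J})$ by lemma \ref{kihonsiki} with $p=2$. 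Since $M_J\to\infty$ as $J\to\infty$ and exponential decay beats the polynomial factor $M_J$, both $\text{Var}[Z_1^{(J)}]$ and the covariance sum tend to zero, so $\tau_Z^{(J)}\to 0$, which is the claim.

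The main obstacle is the uniform covariance estimate: lemma \ref{kihonsiki2} on its own only gives $|\text{Cov}[Z_1^{(J)},Z_{1+k}^{(J)}]|\leq C'\delta^{2M_J}$ with no decay in $k$, which is not summable and would leave $\tau_Z^{(J)}$ potentially bounded away from zero. Exploiting the exact cancellation for $j\leq k$ revealed by Coron's explicit joint distribution (\ref{Coron_joint}) is the key structural input that upgrades the estimate to one decaying both in $k$ and in $J$, and drives the whole argument.
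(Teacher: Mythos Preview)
Your proposal is correct and follows essentially the same route as the paper: expand the variance by stationarity, use the exact cancellation in (\ref{Coron_joint}) for small $j$ to force the effective summation range to $j\geq\max(M_J,k+1)$, combine lemmas \ref{kihonsiki} and \ref{kihonsiki2} to get an exponential bound in both $M_J$ and $k$, and then pass to the two limits. The paper splits the covariance estimate into the two regimes $n\leq M_J$ and $n\geq M_J+1$ rather than writing a single bound with $\max$, and it does not name Tannery's theorem explicitly, but the substance is the same.
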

\begin{proof}
Since $\left\{Z_n^{(J)}\right\}_{n\in\mathbb{N}}$ is ergodic stationary, we have
\begin{align}
\begin{split}
&\text{Var}\left[\frac{1}{\sqrt{K}}\sum_{n=1}^{K}(Z_n^{(J)}-\mu_Z^{(J)})\right]\\
=&\text{Var}\left[Z_1^{(J)}-\mu_Z^{(J)}\right]+2\sum_{n=2}^{K}\left(1-\frac{n-1}{K}\right)\text{Cov}\left[Z_1^{(J)}-\mu_Z^{(J)},Z_{n}^{(J)}-\mu_Z^{(J)}\right].
\end{split}
\end{align}
By lemma \ref{kihonsiki}, there exist positive real numbers $C_1$ and $\delta_1<1$ such that
\begin{align}
\text{Var}\left[Z_1^{(J)}-\mu_Z^{(J)}\right]
=&\sum_{j=M_J}^\infty\left\{g(j)\right\}^2 2^{-L}\left(1-2^{^L}\right)^{j-1}-\left(\mu_Z^{(J)}\right)^2\\
\leq& \sum_{j=M_J}^\infty C_1\delta_1^{j-1}-\left(\mu_Z^{(J)}\right)^2\\
=&\frac{C_1\delta_1^{M_J}}{1-\delta_1}-\left(\mu_Z^{(J)}\right)^2.
\end{align}
Then, by lemma \ref{lem1},
\begin{align}
\lim_{J\to\infty}\lim_{K\to\infty}\text{Var}\left[Z_1^{(J)}-\mu_Z^{(J)}\right]=0.
\end{align}
By (\ref{Coron_joint}), we have
\begin{align}
\text{Pr}\left[A_1=i,\ A_n=j\right]-\text{Pr}\left[A_1=i\right]\text{Pr}\left[A_n=j\right]=0
\end{align}
for $j\leq n-2$.
Then, by lemmas \ref{kihonsiki} and \ref{kihonsiki2}, for $n\geq M_J+1$, there exist positive real numbers $C_2$ and $\delta_2<1$ such that
\begin{align}
&\left|\text{Cov}\left[Z_1^{(J)}-\mu_Z^{(J)},Z_n^{(J)}-\mu_Z^{(J)}\right]\right|\\
=&\left|\sum_{i=M_J}^\infty\sum_{j=n-1}^\infty g(i) g(j) \left(\text{Pr}\left[A_1=i,\ A_n=j\right]-\text{Pr}\left[A_1=i\right]\text{Pr}\left[A_n=j\right]\right)\right|\\
<&C_2\delta_2^{M_J+n},
\end{align}
and for $n\leq M_J$, there exist positive real numbers $C_3$ and $\delta_3<1$ such that
\begin{align}
&\left|\text{Cov}\left[Z_k^{(J)}-\mu_Z^{(J)},Z_l^{(J)}-\mu_Z^{(J)}\right]\right|\\
=&\left|\sum_{i=M_J}^\infty\sum_{j=M_J}^\infty g(i) g(j) \left(\text{Pr}\left[A_1=i,\ A_n=j\right]-\text{Pr}\left[A_1=i\right]\text{Pr}\left[A_n=j\right]\right)\right|\\
<&C_3\delta_3^{2M_J}.
\end{align}
Then, we obtain
\begin{align}
&\left|\sum_{n=2}^{K}\left(1-\frac{n-1}{K}\right)\text{Cov}\left[Z_1^{(J)}-\mu_Z^{(J)},Z_n^{(J)}-\mu_Z^{(J)}\right]\right|\\
\leq&\sum_{n=2}^{K}\left|\text{Cov}\left[Z_1^{(J)}-\mu_Z^{(J)},Z_n^{(J)}-\mu_Z^{(J)}\right]\right|\\
<&\sum_{n=2}^{M_J}C_3\delta_3^{2M_J}+\sum_{n=M_J+1}^{K}C_2\delta_2^{M_J+n}\\
=&C_3\delta_3^{2M_J}(M_J-1)+\frac{C_2\delta_2^{2M_J+1}}{1-\delta_2}\left(1-\delta_2^{K-M_J}\right)\\
&\underset{K\to\infty}{\longrightarrow}\ C_3\delta_3^{2M_J}(M_J-1)+\frac{C_2\delta_2^{2M_J+1}}{1-\delta_2}\\
&\underset{J\to\infty}{\longrightarrow}\ 0.
\end{align}
From the above, lemma \ref{lem3} is holds.
\end{proof}
\begin{thm}\label{thm_stationary}
As $K\to\infty$, $\frac{1}{\sqrt{K}}\sum_{k=1}^{K}(X_k-\mu_X)$ follows $\mathcal{N}(0,\tau_X)$.
\end{thm}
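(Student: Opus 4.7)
The natural approach is a truncation/approximation argument that leverages the three preceding lemmas in order. Writing $S_K:=\frac{1}{\sqrt{K}}\sum_{n=1}^{K}(X_n-\mu_X)$ and decomposing $X_n-\mu_X=(Y_n^{(J)}-\mu_Y^{(J)})+(Z_n^{(J)}-\mu_Z^{(J)})$ using (\ref{heikinnowa}), we obtain $S_K=S_K^{Y,J}+S_K^{Z,J}$ with obvious notation. Lemma \ref{lem2} already supplies a CLT for the bounded part: for each fixed $J$, $S_K^{Y,J}\Rightarrow\mathcal{N}(0,\tau_Y^{(J)})$ as $K\to\infty$. The plan is to let $J\to\infty$ after $K\to\infty$ and show that this recovers a CLT for $S_K$ itself, with the variance matching $\tau_X$ from Lemma \ref{prop2}.

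First I would verify that $\tau_Y^{(J)}\to\tau_X$ as $J\to\infty$. Using the identity
\begin{align}
\text{Var}[S_K]=\text{Var}[S_K^{Y,J}]+\text{Var}[S_K^{Z,J}]+2\,\text{Cov}[S_K^{Y,J},S_K^{Z,J}]
\end{align}
together with Cauchy--Schwarz to bound the covariance, and then sending $K\to\infty$ followed by $J\to\infty$, Lemmas \ref{prop2} and \ref{lem3} force $\tau_Y^{(J)}\to\tau_X$. Consequently the limit laws $\mathcal{N}(0,\tau_Y^{(J)})$ converge weakly to $\mathcal{N}(0,\tau_X)$. Second, Chebyshev's inequality combined with Lemma \ref{lem3} yields
\begin{align}
\lim_{J\to\infty}\limsup_{K\to\infty}\Pr\!\left[|S_K-S_K^{Y,J}|>\varepsilon\right]\leq \lim_{J\to\infty}\limsup_{K\to\infty}\frac{\text{Var}[S_K^{Z,J}]}{\varepsilon^2}=0
\end{align}
for every $\varepsilon>0$, so the remainder $S_K^{Z,J}$ is asymptotically negligible in the appropriate double-limit sense.

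These three ingredients---weak convergence of $S_K^{Y,J}$ for each $J$, weak convergence of the limiting Gaussians as $J\to\infty$, and the asymptotic negligibility of $S_K-S_K^{Y,J}$---are exactly the hypotheses of the standard approximation theorem for weak convergence (Theorem 3.2 of Billingsley's \emph{Convergence of Probability Measures}, sometimes called the ``triangular array truncation'' lemma). Applying it directly gives $S_K\Rightarrow\mathcal{N}(0,\tau_X)$, which is the claim.

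The main obstacle is the bookkeeping around the iterated limits: the variance bookkeeping that shows $\tau_Y^{(J)}\to\tau_X$ depends on knowing that the cross-covariance between $S_K^{Y,J}$ and $S_K^{Z,J}$ does not blow up, which must be handled by Cauchy--Schwarz together with the uniform-in-$K$ boundedness coming out of the proof of Lemma \ref{prop2}. One has to be careful that ``$\limsup_{K\to\infty}\text{Var}[S_K^{Z,J}]<\infty$ uniformly in $J$'' is actually extractable from the proof of Lemma \ref{lem3}; an inspection shows that it is, because the bounds there are monotone in $J$. Once this uniformity is in hand, the remainder of the argument is an application of a standard approximation theorem and contains no further subtleties.
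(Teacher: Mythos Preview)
Your proof is correct and is precisely the elaboration the paper leaves implicit: the paper's own proof of Theorem~\ref{thm_stationary} is the single sentence ``It is proven by lemmas \ref{prop2}, \ref{lem1}, \ref{lem2} and \ref{lem3},'' and your truncation argument via Billingsley's approximation theorem is the standard way to turn that citation into a rigorous proof. (You do not invoke Lemma~\ref{lem1} explicitly because you center $Z_n^{(J)}$ by $\mu_Z^{(J)}$ from the start; the paper presumably lists it only because it is used inside the proof of Lemma~\ref{lem3}.)
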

\begin{proof}
It is proven by lemmas \ref{prop2}, \ref{lem1}, \ref{lem2} and \ref{lem3}.
\end{proof}
\subsection{Nonstationary case}
In real situation, we set $Q$ sufficiently large but finite, and we cannot assume that $\{A_n\}_{n\in\mathbb{N}}$ is ergodic stationary.
However, even in such situation, $\{A_n\}_{n\geq T}$ approximately satisfies ergodic stationary condition as $T\to\infty$ and we can show that using a normal distribution is proper.
Let $X_n^{(Q)}$ be a random variable described as
\begin{align}
X_n^{(Q)}=\begin{cases}
X_n\ &(X_n\leq Q+n-1)\\
g(Q+n-1)\ &(\text{otherwise})
\end{cases}
.
\end{align}
We show that theorem \ref{thm_stationary} holds even if we replace $\{X_n\}_{n\in\mathbb{N}}$ with $\left\{X^{(Q)}_n\right\}_{n\in\mathbb{N}}$.
Concretely, we prove the following.
\begin{thm}\label{thm_nonstationary}
For all $Q$ and $K$, $\mathbb{E}\left[\frac{1}{\sqrt{K}}\sum_{n=1}^K(X_n-X_n^{(Q)})\right]$ and $\text{Var}\left[\frac{1}{\sqrt{K}}\sum_{n=1}^K(X_n-X_n^{(Q)})\right]$ exist, and
\begin{align}
\lim_{K\to\infty}\mathbb{E}\left[\frac{1}{\sqrt{K}}\sum_{n=1}^K(X_n-X_n^{(Q)})\right]&=0,\\
\lim_{K\to\infty}\text{Var}\left[\frac{1}{\sqrt{K}}\sum_{n=1}^K(X_n-X_n^{(Q)})\right]&=0.
\end{align}
\end{thm}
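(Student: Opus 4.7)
The plan is to set $W_n := X_n - X_n^{(Q)}$ and exploit the fact that, by the definition of $X_n^{(Q)}$, one has $W_n = 0$ on the event $\{A_n \leq Q+n-1\}$, while on the complementary tail event $\{A_n \geq Q+n\}$ we simply have $W_n = g(A_n) - g(Q+n-1)$. In the stationary regime governing the $X_n$, the marginal law (\ref{Coron_syuuhen}) gives $\text{Pr}[A_n = i] = 2^{-L}(1-2^{-L})^{i-1}$, which decays geometrically. Thus everything reduces to tail sums of the shape $\sum_{i \geq Q+n}\{g(i)\}^p (1-2^{-L})^{i-1}$ with $p=1,2$, and this is precisely the setting that Lemma \ref{kihonsiki} was designed for.

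First I would apply Lemma \ref{kihonsiki} with $M = 1 - 2^{-L}$, $a = Q+n$, $b = \infty$, and $p = 1$ or $2$. Using the crude bound $|g(A_n) - g(Q+n-1)| \leq g(A_n)$ on the tail event, this produces constants $C > 0$ and $\delta \in (0,1)$ independent of $n$ such that
\begin{align*}
\mathbb{E}\left[|W_n|\right] \leq C \delta^{Q+n-1}, \qquad \mathbb{E}\left[W_n^2\right] \leq C \delta^{Q+n-1}.
\end{align*}
Since each $W_n$ then has finite first and second moments, the finite linear combination $\frac{1}{\sqrt{K}}\sum_{n=1}^K W_n$ has an existing mean and variance, establishing the existence claim. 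The vanishing of the mean follows at once from the triangle inequality and the geometric decay:
\begin{align*}
\left|\mathbb{E}\left[\frac{1}{\sqrt{K}}\sum_{n=1}^K W_n\right]\right| \leq \frac{1}{\sqrt{K}}\sum_{n=1}^K C \delta^{Q+n-1} \leq \frac{C\delta^Q}{\sqrt{K}(1-\delta)},
\end{align*}
which tends to $0$ as $K \to \infty$.

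For the variance I would expand
\begin{align*}
\text{Var}\left[\frac{1}{\sqrt{K}}\sum_{n=1}^K W_n\right] = \frac{1}{K}\sum_{n=1}^K \text{Var}[W_n] + \frac{2}{K}\sum_{1 \leq n < m \leq K}\text{Cov}[W_n, W_m],
\end{align*}
bound the diagonal term directly by $\frac{1}{K}\sum_n C\delta^{Q+n-1} = O(\delta^Q/K)$, and dispatch the off-diagonal terms not by computing the joint law of $(A_n,A_m)$ in the finite-$Q$ regime but by the Cauchy--Schwarz inequality
\begin{align*}
|\text{Cov}[W_n, W_m]| \leq \sqrt{\text{Var}[W_n]\,\text{Var}[W_m]} \leq C\delta^{(2Q+n+m-2)/2}.
\end{align*}
The resulting double sum $\sum_{n<m}\delta^{(n+m-2)/2}$ factors into two geometric series and is therefore finite, so the covariance contribution is also $O(\delta^Q/K)$ and vanishes. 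No step is genuinely hard; the main point to be careful about is applying Lemma \ref{kihonsiki} with the moving truncation threshold $Q+n$ rather than a fixed constant, because it is precisely the $n$-dependence of the geometric factor $\delta^{Q+n-1}$ that makes both sums absolutely convergent uniformly in $K$ and delivers the final $1/\sqrt{K}$ (respectively $1/K$) decay.
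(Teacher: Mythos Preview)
Your argument is correct. The treatment of the mean matches the paper's almost exactly: both of you observe that $W_n$ is supported on the tail $\{A_n\geq Q+n\}$, invoke the marginal law (\ref{Coron_syuuhen}) together with Lemma~\ref{kihonsiki} to get a geometric bound $C\delta^{Q+n}$, and sum.

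The variance is where you diverge. The paper expands the covariance as a double sum over $(i,j)$ weighted by $\text{Pr}[A_n=i,\,A_s=j]-\text{Pr}[A_n=i]\text{Pr}[A_s=j]$ and then invokes Lemma~\ref{kihonsiki2}, which bounds this difference by $C\delta^{i+j}$ using the explicit stationary joint law (\ref{Coron_joint}); this yields an overall bound of order $\delta^{2Q}/K$. You instead bypass the joint law entirely, bounding $|\text{Cov}[W_n,W_m]|$ by Cauchy--Schwarz in terms of the second moments $\mathbb{E}[W_n^2]$, which were already controlled by the marginal tail estimate. Your route is more elementary---it needs only Lemma~\ref{kihonsiki} and not Lemma~\ref{kihonsiki2}---at the modest cost of a weaker decay rate in $Q$ (order $\delta^{Q}/K$ rather than $\delta^{2Q}/K$), which is irrelevant for the stated limit. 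The paper's approach, on the other hand, reuses machinery already established for the stationary CLT and gives a sharper quantitative dependence on $Q$ should one ever need it.
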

\begin{proof}
By lemma \ref{kihonsiki} and (\ref{Coron_syuuhen}), there exist positive real number $C_1$ and $\delta_1<1$ such that
\begin{align}
0\leq& \mathbb{E}\left[\frac{1}{\sqrt{K}}\sum_{n=1}^K(X_n-X_n^{(Q)})\right]\\
=&\frac{1}{\sqrt{K}}\sum_{n=1}^K\sum_{i=Q+n}^\infty\left(g(i)-g(Q+n-1)\right)\text{Pr}\left[A_n=i\right]\\
<& \frac{1}{\sqrt{K}}\sum_{n=1}^KC_1\delta_1^{Q+n}\\
=&\frac{1}{\sqrt{K}}\frac{C_1\delta_1^{Q+1}}{1-\delta_1}\left(1-\delta_1^K\right)\ \underset{K\to\infty}{\longrightarrow}\ 0.
\end{align}
By lemmas \ref{kihonsiki} and \ref{kihonsiki2}, there exist positive real number $C_2$ and $\delta_2<1$ such that
\begin{align}
&\text{Var}\left[\frac{1}{\sqrt{K}}\sum_{n=1}^K(X_n-X_n^{(Q)})\right]\\
\begin{split}
=&\frac{1}{K}\sum_{n=1}^K\sum_{s=1}^K\sum_{i=Q+n}^\infty\sum_{j=Q+s}^\infty\left\{\left(g(i)-g(Q+n-1)\right)\left(g(j)-g(Q+s-1)\right)\right.\\
&\ \ \ \ \ \ \ \ \ \ \ \ \ \ \ \ \ \ \ \ \ \ \ \ \ \ \ \ \ \ \ \ \left.\times\left(\text{Pr}\left[A_n=i,\ A_s=j\right]-\text{Pr}\left[A_n=i\right]\text{Pr}\left[A_s=j\right]\right)\right\}
\end{split}\\
\begin{split}
\leq&\frac{1}{K}\sum_{n=1}^K\sum_{s=1}^K\sum_{i=Q+n}^\infty\sum_{j=Q+s}^\infty\left\{\left(g(i)-g(Q+n-1)\right)\left(g(j)-g(Q+s-1)\right)\right.\\
&\ \ \ \ \ \ \ \ \ \ \ \ \ \ \ \ \ \ \ \ \ \ \ \ \ \ \ \ \ \ \ \ \left.\times\left|\text{Pr}\left[A_n=i,\ A_s=j\right]-\text{Pr}\left[A_n=i\right]\text{Pr}\left[A_s=j\right]\right|\right\}
\end{split}\\
<&\frac{1}{K}\sum_{n=1}^K\sum_{s=1}^KC_2\delta_2^{2Q+n+s}\\
=&\frac{1}{K}\frac{C_2\delta_2^{2Q+2}}{\left(1-\delta_2\right)^2}\left(1-\delta_2^K\right)^2\ \underset{K\to\infty}{\longrightarrow}\ 0.
\end{align}
Then, theorem \ref{thm_nonstationary} holds.
\end{proof}
\subsection{Proof for Maurer's test and Yamamoto and Liu's test}
The discussion in this section holds even if we replace $g$ with $\log$.
Using the result of the previous section, we can prove lemmas \ref{kihonsiki} even if we replace $R^{(0.5)}$ with $R^{(p)}$ where $p\in(0,1)$.
In addition, for arbitrary $n$, $\text{Pr}\left[A_n(R^{(p)})=i\right]$ exponentially reduce when $i$ become larger, which corresponds to (\ref{Coron_syuuhen}).
Then, the discussion in this section also holds for Maurer's test and Yamamoto and Liu's test, and we can confirm the properness of using normal distributions as their reference distributions.
\section{Conclusion}
We showed the properness that we use normal distributions as the reference distributions of Maurer's test, Coron's test, and Yamamoto and Liu's test.
Additionally, we derived the variance of the reference distribution of Yamamoto and Liu's test, and all of the parameters fixing the normal distributions have been now theoretically derived.
These results let us use these tests properly and evaluate randomness more precisely.

\setcounter{section}{0}
\renewcommand{\appendix}{}
\appendix
\renewcommand{\thesection}{Appendix}
\renewcommand{\thesubsection}{\Alph{section}.\arabic{subsection}.}
\end{document}